\newtheorem{lemma}{Lemma}[section]
\newtheorem{theorem}[lemma]{Theorem}
\newtheorem{claim}{\noindent Claim}[section]
\newtheorem{conjecture}[lemma]{Conjecture}
\begin{document}

\setstretch{1.35} 
	
\title{Colouring ($P_2\cup P_4$, diamond)-free graphs with $\omega$ colours}

\author{Hongyang Wang\thanks{Email address: hwanghj@connect.ust.hk}}
\begingroup
\renewcommand{\thefootnote}{} 
\footnotetext{Research was partially supported by RGC Competitive Earmarked Research Grant 16308821.}
\endgroup
\setcounter{footnote}{0}

\affil{Department of Mathematics,

Hong Kong University of Science and Technology,

Clear Water Bay, Hong Kong}

\date{\today}
\maketitle

\begin{abstract}

In this paper, we establish an optimal $\chi$-binding function for 
$(P_2\cup P_4,\text{ diamond})$-free graphs. 
We prove that for any graph $G$ in this class, $\chi(G)\le 4$ when $\omega(G)=2$, 
$\chi(G)\le 6$ when $\omega(G)=3$, and $\chi(G)=\omega(G)$ when $\omega(G)\ge 4$, 
where $\chi(G)$ and $\omega(G)$ denote the chromatic number and clique number of $G$, respectively. 
This result extends the known chromatic bounds for $(P_2\cup P_3,\text{ diamond})$-free graphs \cite{P2P3,P6D2,P2P3DIA} by showing that $(P_2\cup P_4,\text{ diamond})$-free graphs admit the same $\chi$-binding function. 
It also refines the chromatic bound obtained by Angeliya, Karthick and Huang~\cite{P2P4DIA} 
for $(P_2\cup P_4,\text{ diamond})$-free graphs.

\end{abstract}

\textbf{Mathematics Subject Classification}: 05C15, 05C17, 05C69, 05C75
				
\textbf{Keywords}: Graph colouring; $P_2\cup P_4$-free graphs; Diamond-free graphs.

\section{Introduction}

All graphs in this paper are finite and simple. We follow \cite{WEST} for undefined notations and terminology. Let $G$ be a graph with vertex set $V(G)$ and edge set $E(G)$. Two vertices $u,v\in V(G)$ are \emph{adjacent} if and only if $uv \in E(G)$. We denote by $u\sim v$ if $u,v$ are adjacent and $u\nsim v$ if otherwise. The \emph{complement} $\overline{G}$ of $G$ is the graph with vertex set $V(G)$ and $uv \in \overline{G}$ if and only if $uv \notin G$.  

For a vertex $v\in V(G)$, the \emph{neighbourhood} of $v$, denoted by $N_G(v)$, is the set of all vertices adjacent to $v$. It can be simplified to $N(v)$ when there is no danger of ambiguity. For two disjoint vertex sets $A$ and $B$, we let $[A, B]$ denote the set of edges between $A$ and $B$. We say that $A$ is \emph{complete} (resp. \emph{anticomplete}) to $B$ if $|[A,B]|=|A||B|$ (resp. $[A,B]=\emptyset$). For a subset $S\subseteq V(G)$, we denote by $G-S$ the graph obtained from $G$ by deleting the vertices in $S$ together with their incident edges. We denote the subgraph induced on $X\subseteq V(G)$ by $G[X]$. We will, by a standard abuse of notation, identify a subset $X\subseteq V(G)$ with its induced subgraph $G[X]$ when the context is clear.

A \emph{matching} in $G$ is a set of edges without common vertices. The vertices incident to the edges of a matching $M$ are \emph{saturated} by $M$, and a \emph{perfect matching} is a matching that saturates every vertex of $G$. For bipartite graphs, Hall's theorem~\cite{HALL} gives a necessary and sufficient condition for the existence of a matching that saturates one part of the bipartition: if $G$ is a bipartite graph with bipartition $(X,Y)$, then $G$ has a matching that saturates $X$ if and only if $|N_G(S)|\geq|S|$ for every $S\subseteq X$. 

For two graphs $G$ and $H$, we say that $G$ \emph{contains} $H$ if $H$ is isomorphic to an induced subgraph of $G$. When $G$ does not contain $H$, we say that $G$ is \emph{H-free}. For a family $\mathcal{H}$ of graphs, we say that G is $\mathcal{H}$-free if $G$ is $H$-free for every $H\in\mathcal{H}$. For two vertex-disjoint graphs $G_1$ and $G_2$, the \emph{union} $G_1\cup G_2$ is the graph with vertex set $V(G_1\cup G_2)=V(G_1)\cup V(G_2)$ and edge set $E(G_1\cup G_2)=E(G_1)\cup E(G_2)$.

Let $P_n$ and $K_n$ be a \emph{path} and a \emph{complete graph} on $n$ vertices, respectively. The \emph{diamond} is a graph that consists of a $P_3$ with an additional vertex adjacent to all vertices of the $P_3$. 
A \emph{hole} is an induced cycle of length at least 5, and an \emph{antihole} is an induced subgraph whose complement graph is a hole. A hole or antihole is \emph{odd} or \emph{even} if it is of odd or even length, respectively.

\input{P2P4_Diamond.tpx}

We denote by $[k]$ the set $\{1,2,\ldots,k\}$ of the first $k$ positive integers. A \emph{$k$-colouring} of a graph $G$ is a mapping $c:V\rightarrow [k]$ such that $c(u)\neq c(v)$ whenever $u\sim v$. For a subset $A \subseteq V(G)$, we use $c(A)$ to denote the set of colours assigned to the vertices in $A$ under the colouring $c$. A graph $G$ is \emph{$k$-colourable} if it admits a $k$-colouring. The \emph{chromatic number} of $G$, denoted by $\chi(G)$, is the minimum number $k$ for which $G$ is $k$-colourable. We denote by $\omega(G)$ the \emph{clique number} of $G$ (the cardinality of its largest induced clique), and will simply write $\omega$ when the graph is clear from the context. Clearly, $\chi(G) \geq \omega(G)$. If $\chi(H) = \omega(H)$ for every induced subgraph $H$ of $G$, we say that $G$ is perfect. The famous Strong Perfect Graph Theorem, established by Chudnovsky, Robertson, Seymour and Thomas~\cite{SPGT}, states that a graph is perfect if and only if it is (odd hole, odd antihole)-free.

The notion of binding functions was introduced by Gy\'{a}rf\'{a}s~\cite{GY} in 1975, extending the concept of perfect graphs. Let $\mathcal{G}$ be a family of graphs. If there exists a real-valued function such that $\chi(G)\leq f(\omega(G))$ holds for every graph $G\in \mathcal{G}$, then we say that $\mathcal{G}$ is \emph{$\chi$-bounded}, and call $f$ a \emph{binding function} of $\mathcal{G}$. Gy\'{a}rf\'{a}s~\cite{GY} and Sumner~\cite{SUMNER} independently presented the following conjecture.

\begin{conjecture}\cite{GY}\cite{SUMNER}
For every forest $T$, the class of $T$-free graphs is $\chi$-bounded.
\end{conjecture}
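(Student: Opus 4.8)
This is the celebrated Gyárfás--Sumner conjecture, which is open in full generality; I can therefore only describe the strategy behind its proven special cases and indicate where it stalls. The plan is to reduce to the case of a single tree and then induct on the radius of that tree. A standard reduction, observed by Gyárfás, shows that it suffices to prove the statement for trees: given a forest $F=T_1\cup\cdots\cup T_k$ with $k\ge 2$, one writes $F=T_1\cup F'$ with $F'=T_2\cup\cdots\cup T_k$ and argues by induction on $k$, the key point being that in any $F$-free graph the vertices anticomplete to a fixed induced copy of $T_1$ induce an $F'$-free subgraph. So the heart of the matter is to show that for each fixed tree $T$, the class of $T$-free graphs is $\chi$-bounded.

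The main device is that chromatic number concentrates on a single breadth-first layer. I would take a connected $T$-free graph $G$ of large chromatic number, fix a root $r$, and partition $V(G)$ into distance layers $L_0=\{r\},L_1,L_2,\dots$, where $L_i=\{v:\operatorname{dist}(r,v)=i\}$. Any two layers $L_i,L_j$ with $|i-j|\ge 2$ are anticomplete, so colouring the even- and odd-indexed layers separately gives $\chi(G)\le 2\max_i\chi(L_i)$, and hence some single layer $L_i$ satisfies $\chi(L_i)\ge \chi(G)/2$. The strategy is then to embed $T$ level by level, placing its centre near $r$, extending its first branches into this high-chromatic layer, and recursing inside a subgraph of still-large chromatic number to attach the deeper branches. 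For $T=P_t$ the argument can be completed cleanly — a connected $P_t$-free graph has diameter at most $t-2$ because a shortest path is induced, which together with a classical theorem of Gyárfás yields $\chi(G)\le(|V(T)|-1)^{\omega(G)-1}$ — and for trees of radius at most two it was carried out by Kierstead and Penrice, giving a large but finite binding function.

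The hard part, and the reason this remains a conjecture, is the inductive step on the radius: passing from radius $\rho$ to radius $\rho+1$ requires controlling how the high-chromatic layer $L_i$ interacts with the layers above it, and no known technique simultaneously produces, inside $L_i$, a vertex whose neighbourhood still has large chromatic number and the anticomplete room needed to hang the remaining branches of $T$ without prematurely creating an induced copy of $T$ elsewhere. I do not expect to surmount this obstacle: the conjecture is presently known only for restricted families such as paths, brooms, spiders, and trees of bounded radius, and a full proof would be a major advance well beyond the scope of the present paper.
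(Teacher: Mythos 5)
You have correctly recognised that this statement is the Gy\'arf\'as--Sumner conjecture, which the paper states as a conjecture with citations and does not prove (nor does anyone else, in full generality); declining to supply a proof is the right call, and there is nothing in the paper to compare your attempt against. Your sketch of the known partial results --- the reduction from forests to trees, the layering argument giving $\chi(G)\le 2\max_i\chi(L_i)$, Gy\'arf\'as's bound $\chi(G)\le(t-1)^{\omega(G)-1}$ for $P_t$-free graphs (which the paper uses), and the Kierstead--Penrice result for trees of radius two --- is accurate and correctly identifies where the general inductive step breaks down.
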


Gy\'{a}rf\'{a}s~\cite{GY2} proved the conjecture for $T=P_t$: every $P_t$-free graph $G$ satisfied $\chi(G)\leq(t-1)^{\omega(G)-1}$. Note that Gy\'{a}rf\'{a}s-Sumner conjecture concerns forests; this naturally leads us to ask what happens when we forbid a disconnected forest, such as $P_r \cup P_s$.

The problem of colouring $(P_r \cup P_s)$-free graphs (with $r,s \ge 2$) and their subclasses has been studied in various contexts in recent years (see~\cite{SURVEY} for a comprehensive survey). Currently, the best known binding function for $(P_2\cup P_3)$-free graphs is the same as that for $(P_2\cup P_4)$-free graphs: $f(\omega)=\frac{1}{6}\omega(\omega+1)(\omega+2)$~\cite{P2P3}. Although this function is polynomial, finding a better $\chi$-binding function, preferably linear and optimal, for subclasses of $(P_2\cup P_3)$-free and $(P_2\cup P_4)$-free graphs has received considerable attention.

For example, Char and Karthick~\cite{gem} proved that $\chi(G)\leq \lceil \frac{5\omega(G)-1}{4} \rceil$ if $G$ is ($P_2\cup P_3$, gem)-free with $\omega(G)\geq4$. Li, Li and Wu~\cite{house} showed that if a graph $G$ is ($P_2\cup P_3$, house)-free with $\omega(G)\geq2$, then it satisfies the optimal bound $\chi(G)\leq2\omega(G)$. Note that the class of ($P_r\cup P_s$, diamond)-free graphs is a subclass of ($P_{r+s+1}$, diamond)-free graphs. For the latter, Schiermeyer and Randerath~\cite{SURVEY} proved the following general result:

\begin{lemma}\cite{SURVEY}\label{L}
Let $G$ be a connected ($P_t$, diamond)-free graph for $t\ge4$. Then $\chi(G)\le(t-2)(\omega(G)-1)$.

\end{lemma}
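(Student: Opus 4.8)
The plan is to induct on $t$, with the clique number $\omega=\omega(G)$ fixed, exploiting diamond-freeness through one structural fact: for every vertex $v$, the graph $G[N(v)]$ contains no induced $P_3$ (such a $P_3$ together with $v$ would form a diamond), so $G[N(v)]$ is a disjoint union of cliques, each of size at most $\omega-1$; hence $\chi(G[N(v)])\le\omega-1$. I may assume $G$ is connected (otherwise treat each component) and that $\omega\ge 2$ (if $\omega\le 1$ then $G$ is edgeless). For the base case $t=4$, a $(P_4,\text{diamond})$-free graph is $P_4$-free, hence a cograph, hence perfect, so $\chi(G)=\omega\le 2(\omega-1)=(4-2)(\omega-1)$ using $\omega\ge 2$.

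For the inductive step $t\ge 5$, I would fix a vertex $v$ and set $S=N[v]$. First I colour $G[N(v)]$ with a palette $A$ of $\omega-1$ colours, which is possible by the structural fact above. The heart of the argument is the claim that every connected component of $G-S$ is $(P_{t-1},\text{diamond})$-free: diamond-freeness is hereditary, so the real content is that $G-S$ is $P_{t-1}$-free. Granting this, each component is connected with clique number at most $\omega$, so by the induction hypothesis it is $(t-3)(\omega-1)$-colourable; since distinct components are pairwise anticomplete, I can colour all of $G-S$ with a single shared palette $B$ of size $(t-3)(\omega-1)$, disjoint from $A$. Finally $v$ is anticomplete to $G-S$ and its only neighbours lie in $N(v)$ (palette $A$), so $v$ may reuse any colour of $B$. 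The total number of colours is $|A|+|B|=(\omega-1)+(t-3)(\omega-1)=(t-2)(\omega-1)$, as required.

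The main obstacle is proving that $G-S$ is $P_{t-1}$-free. I expect to choose $v$ as an endpoint of a longest induced path of $G$ and argue by contradiction: given an induced $P_{t-1}$, say $y_1\cdots y_{t-1}$, inside a component $C$ of $G-S$, I would extend it to an induced path on at least $t$ vertices, contradicting $P_t$-freeness (or the maximality of the chosen path). Here diamond-freeness is exactly what makes the extension controllable: if $w\in N(v)$ has a neighbour in $C$, then the neighbours of $w$ on the path $y_1\cdots y_{t-1}$ all lie in $N(w)$ and therefore induce a disjoint union of cliques, so $w$ cannot be adjacent to three consecutive path-vertices; its attachment is an induced matching with gaps. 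Using this restriction together with the connectivity of $C$ (to route from the path to a suitable connector $w\in N(v)$, and then to $v$), I would locate an induced path at least one vertex longer than $y_1\cdots y_{t-1}$ after prepending $w$ and $v$.

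Making this extension work uniformly is the delicate step: the real difficulty is guaranteeing a connector $w$ adjacent to an \emph{endpoint} of the path but not to its interior, since an arbitrary $w\in N(v)$ might meet the path only at an interior vertex. Resolving this is where the choice of $v$ as a longest-path endpoint and the matching-attachment property forced by the diamond condition must be combined carefully, and I anticipate that this reduction -- rather than the colour-counting -- is where essentially all of the work lies.
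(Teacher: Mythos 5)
The paper does not prove this lemma; it is quoted from the survey \cite{SURVEY}, so there is no in-paper proof to compare against and your proposal has to stand on its own. Its skeleton (peel off $N(v)$, colour it with $\omega-1$ colours using the fact that diamond-freeness makes $G[N(v)]$ a disjoint union of cliques of size at most $\omega-1$, let $v$ reuse a colour of the remainder, and recurse) is the right one, and the base case and the colour count are fine. But the step you yourself flag as delicate is not just delicate --- the claim is false. Take $t=5$ and let $G$ be the $4$-cycle $w y_1 y_2 y_3$ with a pendant vertex $v$ attached to $w$ and a pendant vertex $y_4$ attached to $y_3$. This graph is triangle-free (hence diamond-free) and $P_5$-free, its longest induced paths have four vertices, and $v$ is an endpoint of one of them (namely $v\,w\,y_1\,y_2$). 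Yet $G-N[v]=\{y_1,y_2,y_3,y_4\}$ is a connected component inducing a $P_4=P_{t-1}$. So no amount of care in "routing a connector to an endpoint" can rescue the statement that components of $G-N[v]$ are $P_{t-1}$-free: the induction on $t$ via this decomposition does not close.

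The standard way to get the bound is Gy\'arf\'as's path-growing argument, which keeps your one-step decomposition but replaces the false claim by an iterated, \emph{anchored} version. Pick $v_1$ arbitrarily, colour $N(v_1)$ with $\omega-1$ colours, let $C_1$ be a component of $G-N[v_1]$ of maximum chromatic number, and pick $v_2\in N(v_1)$ with a neighbour in $C_1$; in general let $C_{i}$ be a maximum-chromatic component of $C_{i-1}-N(v_{i})$ and $v_{i+1}\in N(v_i)\cap C_{i-1}$ a vertex with a neighbour in $C_{i}$. Each step costs only $\chi\bigl(N(v_{i+1})\cap C_i\bigr)\le\omega-1$ colours, and $v_1v_2\cdots$ is an induced path. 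The point you are missing is the off-by-two saving: if $C_m\neq\emptyset$, then $v_1\cdots v_{m+1}u$ is an induced path on $m+2$ vertices for any neighbour $u\in C_m$ of $v_{m+1}$, so $P_t$-freeness forces $m\le t-3$, and the telescoped count gives $\chi(G)\le(m+1)(\omega-1)\le(t-2)(\omega-1)$. No special choice of $v_1$ is needed, and no component ever has to be $P_{t-1}$-free --- only the residual components along the growing path have to die out quickly. I would rewrite the inductive step along these lines rather than trying to repair the $P_{t-1}$-freeness claim.
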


By Lemma \ref{L}, every ($P_r\cup P_s$, diamond)-free graph satisfies $\chi(G)\le(r+s-1)(\omega(G)-1)$. This leaves room for refinement. For instance, Bharathi and Choudum~\cite{P2P3} established that for the class of ($P_2\cup P_3$, diamond)-free graphs, $\chi(G)\leq4$ when $\omega(G)=2$, a bound which is tight, and that the graph is perfect for $\omega(G)\geq5$. For the same class, Karthick and Mishra~\cite{P6D2} showed that $\chi(G)\leq6$ when $\omega(G)=3$, which is also optimal. Furthermore, Prashant, Francis and Raj~\cite{P2P3DIA} proved that $\chi(G)=4$ for $\omega(G)=4$.

As a superclass of ($P_2\cup P_3$, diamond)-free graphs, the class of ($P_2\cup P_4$, diamond)-free graphs was studied by Chen and Zhang~\cite{RANCHEN}. They established the following chromatic bounds: $\chi(G)\leq4$ for $\omega(G)=2$, $\chi(G)\leq7$ for $\omega(G)=3$, $\chi(G)\leq9$ for $\omega(G)=4$, and $\chi(G)\leq2\omega(G)+1$ for $\omega(G)\geq5$. In ~\cite{RANCHEN}, they also derived binding functions for ($P_2\cup P_4$, gem)-free and ($P_2\cup P_4$, butterfly)-free graphs, and proved that ($P_2\cup P_4$, diamond, $C_5$)-free graphs are perfect when $\omega(G)\geq5$. More recently, Angeliya, Karthick and Huang~\cite{P2P4DIA} made significant progress by demonstrating that for ($P_2\cup P_4$, diamond)-free graphs with $\omega(G)\geq3$, $\chi(G)\leq\max\{6,\omega(G)\}$. Their work also settled the case for $\omega(G)=4$ by proving $\chi(G)=4$. However, the bound was not tight for $\omega(G)=5$, leaving room for further refinement. Moreover, they improved on a result from~\cite{RANCHEN} by showing that if $G$ is ($P_2\cup P_4$, diamond, $C_5$)-free with $\omega(G)\geq4$, then $G$ is perfect. Subsequently, Chen, Wu and Zhang~\cite{RANCHEN2} showed that ($P_2\cup P_4$, bull, diamond)-free graphs satisfy $\chi(G)\leq\max\{4, \omega(G)\}$. In the same work, they also established a decomposition theorem for $(P_2\cup P_4)$-free graphs with $\omega(G)\leq 2$, showing that such graphs have $\chi(G)\leq 4$ and that those attaining $\chi(G)=4$ contain the Mycielski--Gr\"otzsch graph.

\subsection{Contribution}

This paper establishes a complete characterisation of the $\chi$-binding function for the class of ($P_2 \cup P_4$, diamond)-free graphs. This result brings two improvements.  
First, it shows that $(P_2\cup P_4,\text{ diamond})$-free graphs admit the same $\chi$-binding function as 
$(P_2\cup P_3,\text{ diamond})$-free graphs~\cite{P2P3,P6D2,P2P3DIA}. 
Second, it refines the chromatic bound for $(P_2\cup P_4,\text{ diamond})$-free graphs that was derived by Angeliya, Karthick and Huang~\cite{P2P4DIA}.

Through a different structural decomposition approach, we obtain the optimal $\chi$-binding function across all values of $\omega(G)$, thereby providing a complete solution for the $\chi$-binding function of this class.

\begin{theorem}\label{THM1}
Let $G$ be a ($P_2\cup P_4$, diamond)-free graph. Then
\[
\chi(G) 
\begin{cases}
\le4, & \text{if } \omega(G) = 2, \\
\le6, & \text{if } \omega(G) = 3, \\
=\omega(G), & \text{if } \omega(G) \geq 4.
\end{cases}
\]
Furthermore, each of these bounds is tight.
\end{theorem}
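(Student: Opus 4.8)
The plan is to treat the three ranges of $\omega(G)$ by a single structural decomposition, using the diamond-free hypothesis to control the neighbourhood of a maximum clique and the $(P_2\cup P_4)$-free hypothesis to control how the remainder of the graph attaches to it. Throughout I may assume $G$ is connected: if $G$ is disconnected then, since $P_2\cup P_4$ is forbidden, at most one component can contain an induced $P_4$ and all other components are then edgeless, so $\chi(G)$ is governed by a single component (isolated vertices and edgeless parts costing no extra colour). The case $\omega(G)=2$ is the triangle-free case, where $G$ is automatically diamond-free; here I would invoke, or reprove along the lines of, the decomposition for triangle-free $(P_2\cup P_4)$-free graphs of~\cite{RANCHEN2} to obtain $\chi(G)\le 4$, with the Mycielski--Gr\"otzsch graph witnessing tightness.

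For the main cases $\omega(G)\ge 3$, fix a maximum clique $K=\{a_1,\dots,a_\omega\}$. The decisive consequence of diamond-freeness is that \emph{every vertex outside $K$ has at most one neighbour in $K$}: if $v\notin K$ were adjacent to both $a_i$ and $a_j$, then choosing any third clique vertex $a_k$ (which exists as $\omega\ge 3$) and noting $v\nsim a_k$ by maximality of $K$, the set $\{v,a_i,a_j,a_k\}$ would induce a diamond. This yields a partition $V(G)\setminus K=V_0\cup V_1\cup\dots\cup V_\omega$, where $V_i$ consists of the vertices whose unique neighbour in $K$ is $a_i$, and $V_0$ is the set of vertices anticomplete to $K$. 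Since $V_i\subseteq N(a_i)$ and $N(a_i)$ is a disjoint union of cliques (again by diamond-freeness), each $G[V_i]$ is a disjoint union of cliques, every such clique having size at most $\omega-1$.

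The heart of the argument is a sequence of structural claims that pin down the edges within and between these classes by producing forbidden copies of $P_2\cup P_4$. The basic gadget is that whenever $u\in V_i$ has a neighbour $v\notin N(a_i)\cup K$, then for a suitable clique vertex $a_j$ with $v\nsim a_j$ (one exists since $v$ has at most one neighbour in $K$ and $\omega\ge 3$) the vertices $v,u,a_i,a_j$ induce a $P_4$, so that no edge of $G$ may be anticomplete to $\{v,u,a_i,a_j\}$. Applied with an edge $a_k a_l\in K$ anticomplete to $V_0$, this already shows $G[V_0]$ is $P_4$-free; iterating the gadget, using the abundance of clique vertices available when $\omega\ge 4$, forces the classes to be mutually anticomplete or to interact only through matchings of whole cliques. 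I would then colour: assign colour $i$ to $a_i$, and extend to each $V_i$ using the $\omega-1$ colours of $[\omega]\setminus\{i\}$, choosing the assignment on the cliques of $V_i$ via a system of distinct representatives whose existence is guaranteed by Hall's theorem once the cross-class constraints are encoded as a bipartite matching condition. When $\omega\ge 4$ the rigidity above makes these local choices globally compatible, giving $\chi(G)=\omega(G)$; when $\omega=3$ the weaker structure forces a bounded surplus, and a direct analysis yields $\chi(G)\le 6$.

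I expect the main obstacle to be exactly this threshold behaviour between $\omega=3$ and $\omega\ge 4$: making precise, and proving, that the extra clique vertices available for $\omega\ge 4$ collapse all cross-class interaction into clique-to-clique matchings, so that no $(\omega+1)$-th colour is ever needed, while at $\omega=3$ residual odd-hole-type configurations survive and must be absorbed into the constant bound $6$. Controlling $V_0$ together with the cross-class edges, and verifying the Hall condition uniformly across the classes, is where the bulk of the case analysis will lie. Tightness is comparatively routine: $K_\omega$ shows that $\chi=\omega$ is best possible for $\omega\ge 4$, the Mycielski--Gr\"otzsch graph gives the tight value $4$ for $\omega=2$, and an explicit $(P_2\cup P_4,\text{ diamond})$-free graph with $\omega=3$ and $\chi=6$, of the type used for $(P_2\cup P_3,\text{ diamond})$-free graphs in~\cite{P6D2}, witnesses tightness for $\omega=3$.
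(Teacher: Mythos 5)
Your first-level decomposition coincides with the paper's: a maximum clique $A$, the observation that diamond-freeness forces every outside vertex to have at most one neighbour in $A$, the resulting classes $C_0,\dots,C_\omega$, and the $P_4$-gadget showing that the union of all but two of these classes is $P_4$-free and hence perfect. Up to that point you are on track, and your tightness examples are essentially the paper's. But the proposal stops exactly where the proof has to start. The central assertion --- that for $\omega\ge 4$ ``iterating the gadget \dots forces the classes to be mutually anticomplete or to interact only through matchings of whole cliques,'' after which Hall's theorem finishes --- is not proved, and as stated it is not what is true. The paper needs a second anchor: a maximum clique $B$ of $H=G-A$ of size $k=\omega(H)$, and the entire argument splits on $k$. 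For $k\le2$ everything is easy; for $k\ge3$ one must first dispose of the cases where some vertex of $A$ has two neighbours in $B$ (Claim~\ref{CD1}) and where $H$ contains a $K_k$ anticomplete to $A$ (Claim~\ref{C2}), and only then does the refined partition into the sets $R_j$, $S_i^j$, $T_i$, $Z$ become usable. Even there the cross-class edges do not collapse to clique-to-clique matchings: for $\omega=4$ a single component of $G[S]$ can span four distinct cells $S_i^j$ with genuine edges between them, and one needs the specific restriction (P5) on which pairs $S_i^p$, $S_j^q$ may see each other before Hall's theorem can be invoked to assign colours; for $\omega\ge5$ a separate argument (P7) shows these edges vanish altogether. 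None of this is visible from your outline, and the Hall condition you invoke is precisely the thing that must be verified.

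The second concrete gap is the regime $\omega\in\{4,5\}$ with $k=3$, which your plan does not anticipate. Here the cliques inside the classes have size at most $3<\omega$, the matching-of-cliques picture gives nothing, and the paper instead proves (Lemmas~\ref{L32} and~\ref{L33}) that among the unions $C_0\cup C_i\cup C_j$ at least one must be triangle-free, by comparing two triangles $X$ and $Y$ lying in different unions and extracting a $P_2\cup P_4$ via Lemma~\ref{L0}. Your $\omega=3$ case is likewise only asserted (``a direct analysis yields $\chi(G)\le6$''); the actual argument there is again a triangle-comparison of this kind, not a surplus count. In short: the skeleton of the first decomposition is correct, but everything from the introduction of $B$ onward --- which is where all of the difficulty and all of the case analysis lives --- is missing, and the one structural claim you do make for $\omega\ge4$ is too strong to be provable in the form you state it.
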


It is important to note that, however, a ($P_2\cup P_4$, diamond)-free graph with clique number at least four is not necessarily perfect. To see this, consider the graph $H_n$ ($n\geq4$) formed by identifying an edge of a $K_n$ with an edge of a $C_5$ (Figure 2 illustrates this construction for $n=5$). One can verify that $H_n$ is ($P_2\cup P_4$, diamond)-free. Clearly, $\chi(H_n)=\omega(H_n)=n$. By the Strong Perfect Graph Theorem~\cite{SPGT}, $H_n$ is imperfect since it contains an induced $C_5$. 

\begin{figure}
    \centering

\begin{tikzpicture}[scale=0.8, line cap=round, line join=round]
  \tikzstyle{vtx}=[circle, fill=black, inner sep=2pt]

  \coordinate (a) at (-1,0);
  \coordinate (b) at (1,0);
  \coordinate (c) at (1.618,1.902);
  \coordinate (d) at (0,3.078);
  \coordinate (e) at (-1.618,1.902);
  \coordinate (h) at (3.236,3.078);  
  \coordinate (g) at (2.618,4.98);
  \coordinate (f) at (0.618,4.98);   
  \foreach \u/\v in {a/b,a/c,a/d,a/e,b/c,b/d,b/e,c/d,c/e,d/e}{
    \draw[line width=0.8pt] (\u)--(\v);
  }
  \draw[line width=0.8pt] (c)--(h)--(g)--(f)--(d);

  \foreach \x in {a,b,c,d,e,f,g,h}{
    \node[vtx] at (\x) {};
  }
\end{tikzpicture}
    \caption{An imperfect ($P_2 \cup P_4$, diamond)-free graph $G$ with $\chi(G)=\omega(G)=5$.}
    \label{fig:placeholder}
\end{figure}
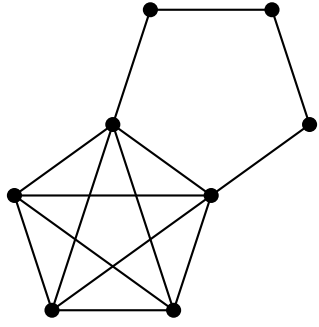

The remainder of this paper is organised as follows. Section 2 presents fundamental structural properties of ($P_2 \cup P_4$, diamond)-free graphs. This structural analysis leads to two primary cases, which are treated in detail in Sections 3 and 4, respectively. By synthesising these results we prove the main theorem in Section 5.

\medskip

We conclude this section with the following lemma, which will be used several times in the sequel.

\begin{lemma}\label{L0}
Let $G$ be a graph containing two disjoint cliques $X$ and $Y$ such that $\min\{|X|,|Y|\}\ge2$ and $\max\{|X|,|Y|\}\ge3$. If $[X,Y]$ induces a nonempty matching, then for any $x\in X$ and $y\in Y$, $G[X\cup Y]$ contains a $P_4$ whose vertex set includes both $x$ and $y$.
\end{lemma}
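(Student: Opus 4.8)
The plan is to reduce to a concrete structural picture and then exhibit the required $P_4$ explicitly by a short case analysis. First I would assume without loss of generality that $|X|\ge 3$ and $|Y|\ge 2$, which is legitimate since both the hypothesis and the conclusion are symmetric in $X$ and $Y$. The one structural fact I would record at the outset is that, because $[X,Y]$ induces a matching $M$, a vertex $u\in X$ and a vertex $v\in Y$ satisfy $u\sim v$ if and only if $uv\in M$; in particular every vertex has at most one neighbour in the opposite clique. Since $X$ and $Y$ are cliques, an induced $P_4$ in $G[X\cup Y]$ can contain at most two vertices from each clique, so the path I build will always have the shape $a\!-\!b\!-\!c\!-\!d$ with $a,b$ in one clique, $c,d$ in the other, and the middle edge $bc$ a single matching edge.

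With this picture fixed, I would split into cases according to how $x$ and $y$ sit relative to $M$. If $x\sim y$ (equivalently $xy\in M$), I would pick any $y'\in Y\setminus\{y\}$, then use that $y'$ has at most one neighbour in $X$ together with $|X\setminus\{x\}|\ge 2$ to find $x'\in X\setminus\{x\}$ with $x'\nsim y'$; the path $x'\!-\!x\!-\!y\!-\!y'$ is then induced and contains both $x$ and $y$. If $x\nsim y$, I would sub-divide on whether $x$ is saturated by $M$. When $x$ is matched to some $y^{*}\neq y$, choosing $p\in X\setminus\{x\}$ with $p\nsim y$ yields the induced path $p\!-\!x\!-\!y^{*}\!-\!y$. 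When $x$ is unsaturated (hence has no neighbour in $Y$), I would look at $y$: if $y$ is matched to some $x^{*}\neq x$, then any $s\in Y\setminus\{y\}$ gives the induced path $x\!-\!x^{*}\!-\!y\!-\!s$; and if both $x$ and $y$ are unsaturated, I would invoke the nonemptiness of $M$ to pick a matching edge $x_0y_0$, necessarily with $x_0\neq x$ and $y_0\neq y$, producing the induced path $x\!-\!x_0\!-\!y_0\!-\!y$.

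The routine part is checking, in each of these four paths, the three required non-adjacencies; each follows immediately from the ``at most one neighbour opposite'' property of $M$ (the two non-edges incident to a matched bridge vertex are automatic, and any non-edge to an unsaturated vertex is free). The real content, and the place I expect the only genuine obstacle, is the first sub-case of each branch, where I must produce a ``far'' clique vertex that avoids the unique possible cross-edge: this is exactly where $\max\{|X|,|Y|\}\ge 3$ is used, since with only two vertices available that single forbidden match could block every choice. The both-unsaturated configuration is the other delicate point, and it is precisely there that the hypothesis that the matching is nonempty becomes indispensable, as it supplies the bridge $x_0y_0$ that neither $x$ nor $y$ can provide.
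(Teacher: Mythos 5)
Your proof is correct and follows essentially the same route as the paper's: both case-split on whether $x\sim y$ and on which of $x,y$ is saturated by the matching, and both exhibit exactly the same four induced paths ($x'\!-\!x\!-\!y\!-\!y'$, $p\!-\!x\!-\!y^{*}\!-\!y$, $x\!-\!x^{*}\!-\!y\!-\!s$, $x\!-\!x_0\!-\!y_0\!-\!y$), using $|X|\ge 3$ in the same two places to dodge the single possible cross-edge. No gaps.
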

\begin{proof}
By symmetry, assume that $|X|\ge3$. Let $x\in X$ and $y\in Y$. We show that $G[X\cup Y]$ contains a $P_4$ whose vertex set contains both $x$ and $y$. 

First, consider the case when $x$ is anticomplete to $Y$. If $y$ is also anticomplete to $X$, then there exist vertices $x^{\prime}\in X$ and $y^{\prime}\in Y$ such that $x^{\prime}\sim y^{\prime}$. Hence, $\{x,x^{\prime},y^{\prime},y\}$ induces a $P_4$. If $y$ is not anticomplete to $X$, then there exists $x^{\prime}\in X$ such that $y\sim x^{\prime}$. In this case, $\{x,x^{\prime},y,y^{\prime\prime}\}$ induces a $P_4$, where $y^{\prime\prime}$ is any vertex in $Y\setminus\{y\}$.

Next, consider the case when $x$ is not anticomplete to $Y$. 
If $x\sim y$, then there exist vertices $x^{\prime}\in X$ and $y^{\prime}\in Y$ such that $x^{\prime}\nsim y^{\prime}$, since $|X|\ge3$. 
Hence, $\{x^{\prime},x,y,y^{\prime}\}$ induces a $P_4$. If $x\nsim y$, then there exists $y^{\prime}\in Y$ such that $x\sim y^{\prime}$. Since $|X|\ge3$, we can choose $x^{\prime}\in X$ such that $y\nsim x^{\prime}$. Thus, $\{x^{\prime},x,y^{\prime},y\}$ induces a $P_4$.
\end{proof}

\section{The Structure of ($P_2 \cup P_4$, diamond)-Free Graphs}

Let $G$ be a $(P_2\cup P_4,\text{ diamond})$-free graph with $\omega(G)\ge 2$. 
Let $A=\{a_1,a_2,\ldots,a_\omega\}$ be a maximum clique in $G$, and let $H=G-A$, which we assume is nonempty. 
For every $v\in V(H)$ we have $|N_A(v)|\le 1$: 
if $\omega\ge 3$ this follows from the fact that $G$ is diamond-free, 
and the case $\omega=2$ is trivial. 
We now partition $V(H)$ into the following subsets, where $i\in[\omega]$:
\begin{align*}
    C_i &=\{v\in V(H) \mid N_A(v)=\{a_i\}\},\\
    C_0 &=\{v\in V(H) \mid N_A(v)=\emptyset\}.
\end{align*}

\begin{lemma}\label{LD1}
    For all $1\leq i\neq j\leq \omega$, every subgraph induced by a subset of 
    $V(H)\setminus (C_i\cup C_j)$ is perfect.
\end{lemma}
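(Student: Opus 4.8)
The plan is to reduce the statement to the single fact that the induced subgraph $G[W]$, where $W:=V(H)\setminus(C_i\cup C_j)$, is $P_4$-free, and then to invoke the classical result that $P_4$-free graphs (cographs) are perfect together with the heredity of perfection. Since every induced subgraph of a perfect graph is again perfect, it suffices to prove that $G[W]$ itself is perfect.

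First I would record the key structural observation that makes everything work. By the definition of the partition, every vertex $v\in W$ lies either in $C_0$, and hence has no neighbour in $A$, or in some $C_k$ with $k\notin\{i,j\}$, and hence has $a_k$ as its \emph{unique} neighbour in $A$. In either case $v\nsim a_i$ and $v\nsim a_j$. Thus both $a_i$ and $a_j$ are anticomplete to $W$. Since $A$ is a clique we also have $a_i\sim a_j$, so $\{a_i,a_j\}$ induces a $P_2$ that is anticomplete to $W$.

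Next I would show that $G[W]$ is $P_4$-free. Suppose for contradiction that some four vertices of $W$ induce a $P_4$. Because the edge $a_ia_j$ is anticomplete to $W$, adjoining $a_i$ and $a_j$ to this $P_4$ produces an induced $P_2\cup P_4$ in $G$, contradicting the hypothesis that $G$ is $(P_2\cup P_4)$-free. Hence $G[W]$ contains no induced $P_4$, so it is a cograph and therefore perfect; by heredity, every subgraph induced by a subset of $V(H)\setminus(C_i\cup C_j)$ is perfect, which is exactly the claim. The only genuine content lies in the structural observation of the second paragraph: once one spots that $a_i$ and $a_j$ together furnish an edge anticomplete to $W$, the $(P_2\cup P_4)$-freeness does all the work, and I anticipate no real obstacle beyond identifying this anticomplete $P_2$.
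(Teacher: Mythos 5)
Your proof is correct and is exactly the argument the paper intends: the paper's one-line proof ("follows directly from the facts that $G$ is $(P_2\cup P_4)$-free and that every $P_4$-free graph is perfect") is just a compressed version of your observation that the edge $a_ia_j$ is anticomplete to $V(H)\setminus(C_i\cup C_j)$, so an induced $P_4$ there would yield a $P_2\cup P_4$. No differences worth noting.
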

\begin{proof}
This follows directly from the facts that $G$ is $(P_2\cup P_4)$-free and that every $P_4$-free graph is perfect~\cite{P4}.
\end{proof}

For the case $\omega =2$, the following lemma shows that $\chi(G)\leq 4$.

\begin{lemma}\label{LP2P4}
If $G$ is a ($P_2\cup P_4$, diamond)-free graph with $\omega =2$, then $\chi(G)\leq 4$.
\end{lemma}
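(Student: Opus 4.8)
The plan is to build an explicit $4$-colouring directly from the partition $V(H)=C_0\cup C_1\cup C_2$ introduced above. First I would record two structural facts that hold because $\omega(G)=2$, i.e.\ $G$ is triangle-free. Every vertex of $C_i$ is adjacent to $a_i$, so any edge inside $C_i$ would create a triangle with $a_i$; hence $C_1$ and $C_2$ are both independent sets. Moreover, applying Lemma~\ref{LD1} with $\{i,j\}=\{1,2\}$ shows that $C_0=V(H)\setminus(C_1\cup C_2)$ induces a perfect graph, so $\chi(C_0)=\omega(C_0)\le\omega(G)=2$; I would then fix a partition $C_0=X\cup Y$ into two independent sets.

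With these pieces in hand the colouring essentially writes itself. I would assign colour $3$ to all of $C_1$, colour $4$ to all of $C_2$, colour $1$ to $X\cup\{a_1\}$, and colour $2$ to $Y\cup\{a_2\}$, and then verify properness by running through the possible edges. By the definition of the partition, $C_0$ is anticomplete to $A$, the vertex $a_1$ is nonadjacent to $C_2$, and $a_2$ is nonadjacent to $C_1$. Thus the only edges of $G$ are: the edge $a_1a_2$ (colours $1$ and $2$); edges inside $C_0$, which join $X$ to $Y$ (colours $1$ and $2$); edges from $a_1$ to $C_1$ and from $a_2$ to $C_2$ (colours $1,3$ and colours $2,4$); edges between $C_1$ and $C_2$ (colours $3$ and $4$); and edges between $C_0$ and $C_1\cup C_2$ (colours in $\{1,2\}$ against colours in $\{3,4\}$). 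In each case the endpoints receive distinct colours, so the map is a proper $4$-colouring.

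The reason I expect essentially no obstacle here is that the decomposition of Section~2 has already done the heavy lifting: it exhibits $V(G)$ as the union of the four independent sets $X,Y,C_1,C_2$ together with the single edge $a_1a_2$, and the two vertices $a_1,a_2$ are anticomplete to $C_0$, so they may safely reuse the colours $1$ and $2$ of the bipartition of $C_0$. The one point that warrants a moment's care is that edges \emph{are} permitted between $C_1$ and $C_2$ and between $C_0$ and $C_1\cup C_2$; assigning $C_1$ and $C_2$ two colours disjoint from the two colours used on $C_0$ neutralises all of these simultaneously, which is precisely what makes four colours sufficient (and, in view of the Gr\"otzsch graph, also necessary, so the bound is tight).
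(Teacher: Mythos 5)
Your proposal is correct and uses exactly the same structural facts as the paper (stability of $C_1$ and $C_2$ from triangle-freeness, and the $2$-colourability of $C_0$ via Lemma~\ref{LD1}); the only difference is a cosmetic permutation of which pair of colours is reused where (the paper reuses $1,2$ on $C_1\cup C_2$ and puts $3,4$ on $C_0$, while you do the reverse). Both assignments are valid, so this is essentially the paper's proof.
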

\begin{proof}
Since $\omega=2$, both $C_1$ and $C_2$ are stable. 
By Lemma~\ref{LD1}, $G[C_0]$ is perfect, and hence it can be coloured with at most two colours. 
Now we may colour $a_1$ and $a_2$ with $1$ and $2$, respectively, and colour $C_1$ and $C_2$ with $2$ and $1$, respectively; 
finally colour $C_0$ with at most two colours from $\{3,4\}$.
\end{proof}

By Lemma~\ref{LP2P4}, we may assume that $\omega(G)\ge 3$ throughout this paper. 
\smallskip

Let $B=\{b_1,b_2,\ldots,b_k\}$ be a maximum clique in $H$, where $1\leq \omega(B)=k\leq \omega$. For $\omega \geq 4$, the next lemma provides an upper bound on the chromatic number of $G$ in terms of $\omega$ and $k$.

\begin{lemma}\label{LD2}
    If $G$ is a ($P_2\cup P_4$, diamond)-free graph with $\omega \geq 4$, then $\chi(G)\leq \max\{2k, \omega\}$.
\end{lemma}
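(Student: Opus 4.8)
The goal is to show $\chi(G)\le\max\{2k,\omega\}$ where $k=\omega(H)$ and $H=G-A$. My plan is to colour $G$ by first colouring the clique $A$ greedily and then extending the colouring to $H=G-A$, exploiting the partition of $V(H)$ into $C_0,C_1,\dots,C_\omega$. The key structural facts I want to use are: (i) each $C_i$ with $i\ge 1$ contains no edge joining it to $a_i$ beyond the single neighbour, and more importantly each $v\in V(H)$ has $|N_A(v)|\le 1$; (ii) by Lemma~\ref{LD1}, for any two indices $i\ne j$ the graph induced on $V(H)\setminus(C_i\cup C_j)$ is perfect, so its chromatic number equals its clique number, which is at most $k$. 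Since $B$ is a maximum clique of $H$ with $\omega(B)=k\le\omega$, I expect to colour the bulk of $H$ using at most $k$ colours and then reconcile this with the colours already used on $A$.

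The first step is to understand how the clique $B$ meets the parts $C_i$. Because $G$ is diamond-free and $A$ is a maximum clique, I would argue that $B$ can intersect at most two of the sets $C_1,\dots,C_\omega$ in a controlled way; more precisely I would identify two indices, say the indices where $B$ has most of its mass, and set those aside. The heart of the argument is then: after removing (the vertices attached to) those two special indices, Lemma~\ref{LD1} tells me the remaining part of $H$ is perfect and hence $k$-colourable. I would colour that perfect part with colours $\{1,\dots,k\}$, arranging the colour classes so that each colour class is a stable set, and then handle the two exceptional parts $C_i,C_j$ with a fresh batch of at most $k$ colours, giving the bound $2k$. Whenever $2k\le\omega$ I instead want to absorb everything into the $\omega$ colours already forced by $A$, which is where the $\max\{2k,\omega\}$ comes from.

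The delicate point is the interaction between the colours on $A$ and the colours on $H$: a vertex $v\in C_i$ is forbidden only the single colour of $a_i$, so I have freedom to permute colour classes to avoid conflicts, and I would use a Hall-type matching argument (the bipartite matching condition stated in the introduction) to align the stable colour classes of the perfect part with the clique $A$ so that each $a_i$ can reuse a colour not appearing on $C_i$. The main obstacle I anticipate is controlling the vertices in $C_0$ (anticomplete to $A$) together with the two exceptional classes $C_i,C_j$ simultaneously: I must ensure that the $P_2\cup P_4$-freeness prevents a vertex of $C_0$ from creating a long induced path with a $P_2$ sitting inside $C_i\cup C_j\cup A$, which is exactly the kind of configuration Lemma~\ref{L0} is designed to rule out. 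I would therefore expect to invoke Lemma~\ref{L0} to show that the edges between the relevant cliques cannot form a nonempty matching without producing a forbidden $P_4$, thereby forcing the adjacency structure to be either complete or anticomplete and collapsing the colouring into the claimed $\max\{2k,\omega\}$ bound.
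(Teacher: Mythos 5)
There is a genuine gap: your central colouring step does not work. You propose to set aside two exceptional indices $i,j$, colour all of $V(H)\setminus(C_i\cup C_j)$ with one palette of $k$ colours, and give $C_i\cup C_j$ a fresh palette of $k$ colours. But $V(H)\setminus(C_i\cup C_j)$ contains $C_p$ for all $\omega-2$ indices $p\notin\{i,j\}$, and each such $C_p$ may meet every one of the $k$ colour classes of the perfect-graph colouring. In that case every $a_p$ with $p\notin\{i,j\}$ must avoid the same $k$ colours, so these $\omega-2$ mutually adjacent vertices must receive distinct colours from a pool of size $\max\{2k,\omega\}-k=\max\{k,\omega-k\}$, which is smaller than $\omega-2$ whenever $\omega\ge k+3$ (e.g.\ $\omega=10$, $k=4$). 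No renaming of colour classes and no Hall-type argument can repair this, since the obstruction is that all the lists $[\,\max\{2k,\omega\}\,]\setminus c(C_p)$ can coincide; Hall's condition genuinely fails. Two smaller points: your preliminary claim that $B$ meets at most two of $C_1,\dots,C_\omega$ is unjustified and false in general (each of the $k$ vertices of $B$ has at most one neighbour in $A$, but these neighbours can all be different, so $B$ can meet up to $k$ of the $C_i$), and Lemma~\ref{L0} plays no role here --- this lemma needs no forbidden-subgraph analysis beyond what Lemma~\ref{LD1} already encodes.

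The paper's proof avoids all of this by splitting the index set symmetrically rather than into ``two exceptional parts versus the rest'': it sets $X=C_0\cup C_1\cup\cdots\cup C_{\lfloor\omega/2\rfloor}$ and $Y=C_{\lfloor\omega/2\rfloor+1}\cup\cdots\cup C_\omega$. Since $\omega\ge4$, each of $X$ and $Y$ omits at least two of $C_1,\dots,C_\omega$, so both are perfect by Lemma~\ref{LD1} and $k$-colourable. Crucially, the vertices of $X$ have neighbours in $A$ only among $a_1,\dots,a_{\lfloor\omega/2\rfloor}$, so giving $X$ the palette $\{\lfloor\omega/2\rfloor+1,\dots,\lfloor\omega/2\rfloor+k\}$ creates no conflict with $A$, and symmetrically for $Y$ with a palette avoiding $\{\lfloor\omega/2\rfloor+1,\dots,\omega\}$. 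Each half of $A$ only has to dodge the palette of the opposite half, which is what makes $\max\{2k,\omega\}$ colours suffice. If you reorganise your argument around this balanced split, the Hall machinery and Lemma~\ref{L0} become unnecessary.
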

\begin{proof}
Let 
\[
X=C_0\cup C_1\cup \cdots\cup C_{\lfloor\frac{\omega}{2}\rfloor}
\quad\text{and}\quad
Y=C_{\lfloor\frac{\omega}{2}\rfloor+1}\cup \cdots \cup C_\omega.
\]
By Lemma~\ref{LD1}, both $G[X]$ and $G[Y]$ are perfect, and hence each of $X$ and $Y$ can be 
coloured with at most $k$ colours. Moreover, $V(G)=A\cup X\cup Y$.

Now we may colour $a_i$ with $i$, and colour $X$ with colours from $\{\lfloor\frac{\omega}{2}\rfloor+1,\lfloor\frac{\omega}{2}\rfloor+2,\ldots,\lfloor\frac{\omega}{2}\rfloor+k\}$. If $k\leq \lfloor\frac{\omega}{2}\rfloor$, then we colour $Y$ with colours from $\{1,2,\ldots,\lfloor\frac{\omega}{2}\rfloor\}$. 
If  $k> \lfloor\frac{\omega}{2}\rfloor$, then we colour $Y$ with colours from $\{1,2,\ldots,\lfloor\frac{\omega}{2}\rfloor,\lfloor\frac{\omega}{2}\rfloor+k+1,\ldots,2k\}$. In both cases we obtain $\chi(G)\leq \max\{2k, \omega\}$.
\end{proof}

For $\omega =3$ and $k\leq 2$, next lemma shows that $\chi (G)\leq 6$.

\begin{lemma}\label{LD21}
If $G$ is a ($P_2\cup P_4$, diamond)-free graph with $\omega =3$ and $k\leq 2$, then $\chi(G)\leq 6$.
\end{lemma}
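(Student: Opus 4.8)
The plan is to colour $G$ directly with the palette $\{1,\dots,6\}$, split into a ``lower'' block $\{1,2\}$ and an ``upper'' block $\{3,4,5,6\}$, exploiting the fact (from diamond-freeness, since $\omega=3\ge 3$) that each vertex of $H$ has at most one neighbour in $A$. Consequently $V(H)=C_0\cup C_1\cup C_2\cup C_3$, the only $H$-neighbours of $a_i$ are the vertices of $C_i$, and $C_0$ is anticomplete to $A$. I would first colour the apices by $a_i\mapsto i$ for $i\in\{1,2,3\}$.

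Next I would colour the two blocks separately. The graph $G[C_0\cup C_1\cup C_2]$ is an induced subgraph of $G$, hence $(P_2\cup P_4,\text{ diamond})$-free, and its clique number is at most $\omega(H)=k\le 2$; by Lemma~\ref{LP2P4} (with the trivial bound when it is edgeless) it is $4$-colourable, so I would colour it with the upper colours $\{3,4,5,6\}$. On the other hand $G[C_3]$ is perfect by Lemma~\ref{LD1}, since $C_3\subseteq V(H)\setminus(C_1\cup C_2)$, and it is triangle-free because $k\le 2$; hence it is bipartite, and I would colour it with the lower colours $\{1,2\}$. It then remains to verify properness: the two palettes are disjoint, so every edge between $C_3$ and $C_0\cup C_1\cup C_2$ is automatically bichromatic, while edges inside each part are handled by the two colourings above. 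For the apex edges, $a_1,a_2$ (colours $1,2$) see only $C_1,C_2$, which lie in the upper block, and $a_3$ (colour $3$) sees only $C_3$, which lies in the lower block; together with the fact that $a_1,a_2,a_3$ get distinct colours, this shows the colouring is proper, giving $\chi(G)\le 6$.

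Since the construction is explicit, I do not expect a genuine obstacle here; the only real idea is the colour bookkeeping that keeps the two palettes disjoint. The key point is \emph{which} class to isolate: the even split used in Lemma~\ref{LD2} fails for $\omega=3$ precisely because $C_2\cup C_3=V(H)\setminus(C_0\cup C_1)$ is not covered by Lemma~\ref{LD1} and need not be perfect. Isolating a single positive class $C_3$ instead makes $C_3$ bipartite (by Lemma~\ref{LD1}) and leaves $C_0\cup C_1\cup C_2$ perfect-free but still $4$-colourable via Lemma~\ref{LP2P4}; this is exactly what frees the two colours $\{1,2\}$ for $C_3$ while allowing $a_3$ to reuse colour $3$. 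I would finish by noting that the degenerate cases (empty $C_3$, or an edgeless $C_0\cup C_1\cup C_2$) are subsumed by the trivial chromatic bounds.
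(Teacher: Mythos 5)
Your proof is correct, but it takes a mildly different route from the paper's. The paper splits $V(H)$ into \emph{three} blocks $C_0\cup C_1$, $C_2$ and $C_3$, each of which is perfect by Lemma~\ref{LD1} and has clique number at most $k\le 2$, hence is $2$-colourable; it then colours each block with a two-colour palette avoiding the colour of the corresponding apex ($\{2,3\}$, $\{1,4\}$ and $\{5,6\}$ respectively), which needs nothing beyond Lemma~\ref{LD1}. You instead split into \emph{two} blocks, $C_0\cup C_1\cup C_2$ and $C_3$, and since the first block is not of the form covered by Lemma~\ref{LD1} you invoke the already-established $\omega=2$ bound of Lemma~\ref{LP2P4} on it as an induced subgraph; this is legitimate (Lemma~\ref{LP2P4} precedes this statement in the paper and applies to any induced subgraph with clique number exactly $2$, the edgeless case being trivial), and your properness check --- disjoint palettes $\{3,4,5,6\}$ and $\{1,2\}$, with $a_3$ safely reusing colour $3$ because its only neighbours in $H$ lie in $C_3$ --- is complete. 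The trade-off is that the paper's argument is more elementary and mirrors the block pattern later reused in Lemma~\ref{LD2}, while yours leans on a heavier lemma but manages with fewer blocks; both yield exactly six colours. Your side remark about why the even split of Lemma~\ref{LD2} fails at $\omega=3$ (namely that $C_2\cup C_3$ omits only one positive class and so is not covered by Lemma~\ref{LD1}) is also accurate.
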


\begin{proof}
By Lemma~\ref{LD1}, each of the sets $C_0\cup C_1$, $C_2$, and $C_3$ induces a perfect graph. Since $k\le 2$, each of them can be coloured with at most two colours.  Hence, we may colour $a_i$ with colour $i$ for $i\in [3]$, colour $C_0\cup C_1$ with colours from $\{2,3\}$, colour $C_2$ with colours from $\{1,4\}$, and colour $C_3$ with colours from $\{5,6\}$.  
\end{proof}

By Lemmas~\ref{LD2} and~\ref{LD21}, if $\omega=3$ and $k\le 2$, then $\chi(G)\le 6$; 
whereas if $\omega\ge 4$ and $k\le 2$, then $\chi(G)=\omega$. 
Hence, for the remainder of this paper we assume that $k\ge 3$.

Since $H$ is diamond-free and $B$ is a maximum clique of $H$, and $k\ge 3$, 
for every $v\in V(H)\setminus B$ we have $|N_B(v)|\le 1$.  We partition $V(H)\setminus B$ into the following subsets, where $i\in [\omega]$ and $j\in [k]$:
\begin{align*}
    R_j &=\{v\in V(H)\setminus B \mid N_A(v)=\emptyset, N_B(v)= \{b_j\} \},\\
    S_i^j &=\{v\in V(H)\setminus B \mid N_A(v)=\{a_i\}, N_B(v)=\{b_j\}\},\\  
    T_i &=\{v\in V(H)\setminus B \mid N_B(v)=\emptyset, N_A(v)=\{a_i\}\},\\
    Z &=\{v\in V(H)\setminus B \mid N_A(v)=\emptyset,  N_B(v)=\emptyset \}.
\end{align*}
Note that $\omega(R_j), \omega(T_i) \le \omega - 1$ and $\omega(Z) \le k$.  Furthermore, by Lemma~\ref{LD1}, each of the sets $R_j$, $S_i^j$, $T_i$, and $Z$ induces a perfect graph. We then define
\[
R = \bigcup_{j=1}^{k} R_j,\quad 
S = \bigcup_{j=1}^k\bigcup_{i=1}^{\omega} S_i^j,\quad 
T = \bigcup_{i=1}^{\omega} T_i.
\]
Note that 
\[
V(G)=A\cup B\cup R\cup S\cup T\cup Z.
\]
If we define
\begin{align*}
    C_j^\prime &=\{v\in V(H)\setminus B \mid N_B(v)=\{b_j\}   \},\\
    C_0^\prime &=\{v\in V(H)\setminus B \mid N_B(v)=\emptyset\},
\end{align*}
then Table 1 illustrates the relations among the sets in the  partition introduced earlier.

\begin{table}[h]
\centering
\scalebox{0.9} 
{
\renewcommand{\arraystretch}{2} 
\setlength{\tabcolsep}{15pt} 
\begin{tabular}{c|cccccc}
\rule{0pt}{3ex} 
$\cap$ & $C_0'$ & $C_1'$ & $C_2'$ & $C_3'$ & $\cdots$ & $C_k'$ \\
\hline
\rule{0pt}{3ex}
$C_0$ & $Z$ & $R_1$ & $R_2$ & $R_3$ & $\cdots$ & $R_k$ \\
$C_1$ & $T_1$ & $S_1^1$ & $S_1^2$ & $S_1^3$ & $\cdots$ & $S_1^k$ \\
$C_2$ & $T_2$ & $S_2^1$ & $\ddots$ & & & $\vdots$ \\
$C_3$ & $T_3$ & $S_3^1$ & & $\ddots$ & & $\vdots$ \\
$\vdots$ & $\vdots$ & $\vdots$ & & & $\ddots$ & $\vdots$ \\
$C_\omega$ & $T_\omega$ & $S_\omega^1$ & $\cdots$ & $\cdots$ & $\cdots$ & $S_\omega^k$ \\
\end{tabular}
}
\caption{Partitions of $V(H)\setminus B$}
\label{tab: Partition}
\end{table}

With this partition, we now conclude Section 2 by presenting the following two claims.

\begin{claim}\label{CD1}
If there exists a vertex $v\in A$ such that $|N_B(v)|\geq 2$, then $\chi(G) = \omega$.
\end{claim}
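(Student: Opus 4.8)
The plan is to first pin down the rigid local structure the hypothesis forces, and then reduce the colouring to list-colouring a few perfect pieces. Write $v=a_i$ with $|N_B(a_i)|\ge 2$, and pick distinct $b_j,b_{j'}\in N_B(a_i)$. If some $b_m\in B$ were non-adjacent to $a_i$, then $\{a_i,b_j,b_{j'},b_m\}$ would induce a diamond (recall $k=|B|\ge 3$); hence $a_i$ is complete to $B$. Since every vertex of $H$ has at most one neighbour in $A$ and each $b_j$ already has $a_i$ as that neighbour, every $a_{i'}$ with $i'\ne i$ is anticomplete to $B$. Therefore $K:=B\cup\{a_i\}$ is a clique, so $k+1\le\omega$ and in particular $\omega\ge 4$; moreover $B\subseteq C_i$, and $S_i^{\,j}=\emptyset$ for every $j$, because a vertex of $H$ adjacent to both $a_i$ and some $b_j$ would have two neighbours in the clique $K$, hence (again by diamond-freeness) be complete to $K$ and so have at least two neighbours in $B$.

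By Lemma~\ref{LD2} it suffices to treat the case $2k>\omega$, as otherwise $\chi(G)\le\max\{2k,\omega\}=\omega$. For the colouring I would fix the palette $[\omega]$, set $a_\ell\mapsto\ell$, and try to extend the colouring over $H=G-A$; the only external constraint is that a vertex of $C_\ell$ must avoid colour $\ell$ (vertices of $C_0$ are unconstrained). After deleting $A$, every union of cones lies inside $H$ and so has clique number at most $k$, and by Lemma~\ref{LD1} any union that omits at least two cones is $P_4$-free, i.e.\ a perfect cograph. Thus the problem becomes a list-colouring of such cographs in which each vertex of $C_\ell$ carries the co-singleton list $[\omega]\setminus\{\ell\}$ of size $\omega-1\ge k$. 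The whole point is to beat the plain two-block count of Lemma~\ref{LD2} (which can cost up to $2k$ colours) by exploiting this per-vertex freedom; for example $B\subseteq C_i$ may simply reuse the $k\le\omega-1$ colours of the anticomplete clique $A\setminus\{a_i\}$ rather than demand a fresh palette.

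The hard part is the tight regime $2k>\omega$ (essentially $k=\omega-1$, when $K$ is itself a maximum clique): here the lists have size exactly $k=\omega(\text{piece})$, and tight list-colouring is \emph{not} guaranteed by perfectness alone, since cographs are not chromatic-choosable — already $K_{3,3}$ fails for co-singleton lists. I therefore expect to need the finer adjacency structure of $(P_2\cup P_4,\text{diamond})$-free graphs rather than mere perfectness of the pieces. Concretely, I would analyse the interaction of $B$ with the other (near-)maximum cliques of $H$ through Lemma~\ref{L0}: in the diamond-free graph $H$ any two maximum cliques share at most one vertex and send only a matching between them, so Lemma~\ref{L0} yields a $P_4$ that, together with a suitable disjoint edge, threatens a forbidden $P_2\cup P_4$. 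This should force the cones meeting a common stable set to share their forbidden colours, thereby excluding the $K_{3,3}$-type obstruction to the list-colouring. Making this exclusion precise, and then assembling the colours of the two blocks so that they fit inside $[\omega]$, is the step I expect to require the most care.
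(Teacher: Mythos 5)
Your first paragraph is correct and coincides with the paper's opening moves: $a_i$ is complete to $B$, the remaining vertices of $A$ are anticomplete to $B$, hence $k\le\omega-1$ and (since $k\ge3$) $\omega\ge4$, and $S_i^{j}=\emptyset$ for all $j$. From there on, however, the proposal is a programme rather than a proof, and it stalls exactly where you yourself flag the difficulty. You reduce via Lemma~\ref{LD2} to the regime $2k>\omega$, recast the problem as list-colouring perfect cographs with co-singleton lists, correctly observe that perfectness alone cannot close a tight list-colouring (the $K_{3,3}$ obstruction), and then leave the exclusion of that obstruction as something that ``should'' follow from Lemma~\ref{L0}. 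That exclusion is the entire content of the claim, and it is not supplied.

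What the paper actually proves, and what is missing from your argument, is a collection of anticompleteness statements for the refined partition of $V(H)\setminus B$ into the sets $R_j$, $S_i^j$, $T_i$, $Z$ (available because $k\ge3$ and $H$ is diamond-free, so each vertex of $V(H)\setminus B$ has at most one neighbour in $B$): for each fixed $i\ne 1$ at most one of $S_i^1,\dots,S_i^k$ is nonempty (and symmetrically for each fixed $j$); and the sets $Z\cup T_1$, $R$, $S$, $T\setminus T_1$ are pairwise anticomplete, with the $R_j$ pairwise anticomplete and the $T_i$ ($i\ne1$) pairwise anticomplete. Each of these is established by exhibiting an explicit $P_2\cup P_4$, using $\omega\ge4$ and $k\ge3$ to find the spare edge disjoint from the $P_4$. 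Once these facts are in place there is no list-colouring issue at all: every piece is a perfect graph of clique number at most $\omega-1$ that is anticomplete to every other piece and sees exactly one vertex of $A\cup B$, so it can be coloured independently with $\omega-1$ colours avoiding that single forbidden colour, while $S$ collapses to a few stable sets that reuse the colours of $A$. Your intended conclusion, that cones meeting a common stable set ``share their forbidden colours,'' is both weaker than what is needed and never derived; without the pairwise anticompleteness the two blocks of your decomposition cannot be fitted together inside $[\omega]$.
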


\begin{proof}

Without loss of generality, let $a_1\in A$ be a vertex such that $|N_B(a_1)|\geq 2$.
Since $G$ is diamond-free, $a_1$ is complete to $B$; that is, $B\subseteq C_1$. Thus, $\{a_2,a_3\ldots,a_\omega \}$ is anticomplete to $\{b_1,b_2,\ldots,b_k\}$. It follows that $k\leq \omega-1$. Otherwise, $\{a_1\}\cup B$ would be a clique on $\omega+1$ vertices, a contradiction. Recall that we have already assumed $k\ge 3$ in the previous discussion; hence, $\omega\ge 4$.  We consider the following partition of $G$:

\[
V(G)=A\cup B\cup R\cup S\cup (T\setminus T_1)\cup (Z\cup T_1).
\]
Next, we list several properties of the sets $R$, $S$, $(T\setminus T_1)$, and $(Z\cup T_1)$.

\medskip

\noindent(1) $S_1^j=\emptyset$ for all $j\in [k]$, and $S_i^j$ is stable for all $i\in [\omega]\setminus\{1\}$ and $j\in [k]$.

This follows directly from the fact that $G$ is diamond-free.
\smallskip

\noindent(2) At most one set among $S_i^1,S_i^2,\ldots,S_i^k$ is nonempty, and at most one set among $S_2^j,S_3^j,\ldots,S_\omega^j$ is nonempty, for all $i\in [\omega]\setminus\{1\}$ and $j\in [k]$.

The two cases admit similar proofs; we therefore prove only the first, that is, at most one set among $S_i^1,S_i^2,\ldots,S_i^k$ is nonempty. To the contrary, suppose without loss of generality that there exist $x\in S_2^1$ and $y\in S_2^2$. If $x\sim y$, then $\{a_3,a_4,b_3,b_1,x,y\}$ induces a $P_2\cup P_4$. If $x\nsim y$, then $\{a_3,a_4,x,b_1,b_2,y\}$ induces a $P_2\cup P_4$. Both cases lead to a contradiction. 
\smallskip

\noindent(3) $Z\cup T_1$ is anticomplete to $R\cup S\cup (T\setminus T_1)$.

Suppose, to the contrary, that there exist vertices $x\in Z\cup T_1$ and $y\in R\cup S\cup (T\setminus T_1)$ such that $x\sim y$. If $y\in R\cup S$, then, by symmetry, we may assume that $y\sim b_1$. 
Since $\omega\ge 4$, there exist $a_i,a_j\in A\setminus\{a_1\}$ 
that are anticomplete to $x$ and $y$. Thus, $\{a_i,a_j,x,y,b_1,b_2\}$ induces a $P_2\cup P_4$. If $y\in T\setminus T_1$, then, by symmetry, we may assume that $y\sim a_2$. Since $k\geq 3$, there exist $b_p,b_q\in B$ that are anticomplete to $x$ and $y$. Hence, $\{b_p,b_q,x,y,a_2,a_3\}$ induces a $P_2\cup P_4$. In both cases, we obtain a contradiction.
\smallskip

\noindent(4) $R$ is anticomplete to $S\cup (T\setminus T_1)\cup (Z\cup T_1)$.

By (3) and symmetry, it suffices to show that $R_1$ is anticomplete to $S\cup (T\setminus T_1)$. Suppose to the contrary that there exist vertices $x\in R_1$ and $y\in S\cup (T\setminus T_1)$ such that $x\sim y$. For the case $y\in S$, if $y\nsim b_1$, then  
since $k\ge 3$, there exists $b_t\in B\setminus \{b_1\}$ nonadjacent to $y$; 
and since $\omega\ge 4$, there exist $a_i,a_j\in A\setminus\{a_1\}$ 
anticomplete to $x$ and $y$.  
Thus, $\{a_i,a_j,b_t,b_1,x,y\}$ induces a $P_2\cup P_4$. 
If $y\sim b_1$ and $y\sim a_p$, then $b_2$ and $b_3$ are anticomplete to $x$ and $y$. Hence, $\{b_2,b_3,x,y,a_p,a_q\}$ induces a $P_2\cup P_4$, where $a_q\in A\setminus\{a_1,a_p\}$. For the case $y\in T\setminus T_1$, since $\omega\geq 4$, there exist $a_i,a_j\in A\setminus \{a_1\}$ that are anticomplete to $x$ and $y$. Thus, $\{a_i,a_j,b_2,b_1,x,y\}$ induces a $P_2\cup P_4$. All cases lead to a contradiction.
\smallskip

\noindent(5) $R_i$ is anticomplete to $R_j$ for distinct $i,j\in [k]$.

For the sake of contradiction, suppose by symmetry that there exist $x\in R_1$ and $y\in R_2$ such that $x\sim y$. Note that $\{a_2,a_3,b_3,b_1,x,y\}$ induces a $P_2\cup P_4$, a contradiction.
\smallskip

\noindent(6) $T\setminus T_1$ is anticomplete to $R\cup S\cup (Z\cup T_1)$
\smallskip

\noindent(7) $T_i$ is anticomplete to $T_j$ for distinct $i,j\in [\omega]\setminus \{1\}$.

The proofs of (6) and (7) are analogous to those of (4) and (5), and hence are omitted.
\smallskip

\noindent(8) $S$ is anticomplete to $R\cup (T\setminus T_1)\cup (Z\cup T_1)$.

This follows from (3), (4), and (6).
\medskip

By (1) and (2) and by symmetry, we may let $S=\bigcup_{i=1}^k S_{i+1}^i$. 
By Lemma~\ref{LD1}, each of the sets $R_j$, $T_i$, and $Z\cup T_1$ induces a perfect graph. 
Recall that $k \le \omega - 1$. 
Hence, each of these sets can be coloured with at most $\omega - 1$ colours. 
Finally, we colour $G$ with $\omega$ colours as follows.

\begin{itemize}
\item Colour $a_i$ with colour $i$ for $i\in [\omega]$.
\item Colour $b_i$ with colour $i+1$ for $i\in [k]$.
\item By (8), colour $S_{i+1}^i$ with colour $i$ for $i\in [k]$ (recall that $S_{i+1}^i$ is stable).
\item By (3), colour $Z\cup T_1$ with at most $\omega-1$  colours from $[\omega]\setminus \{1\}$.
\item By (4) and (5), colour $R_i$ with at most $\omega-1$ colours from $[\omega]\setminus \{i+1\}$ for $i\in [k]$.
\item By (6) and (7), colour $T_i$ with at most $\omega-1$ colours from $[\omega]\setminus \{i\}$ for $i\in [\omega]\setminus \{1\}$.
\end{itemize}

This completes the proof of Claim~\ref{CD1}.
\end{proof}

By Claim~\ref{CD1}, we may assume that for every vertex $v\in A$, $|N_B(v)|\leq 1$. Thus, 
\begin{equation}\label{E1}
    \text{$[\{a_1,a_2,\ldots,a_\omega\},\{b_1,b_2,\ldots,b_k\}]$ is a matching in $G$.}
    \tag{$\ast$}
\end{equation}

\begin{claim}\label{C2}
If $H$ contains a $K_k$ whose vertex set is anticomplete to $A$, then $\chi(G)=\omega$.
\end{claim}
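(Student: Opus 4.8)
My plan is to reduce to the case in which the maximum clique $B$ of $H$ is itself anticomplete to $A$. The hypothesised $K_k$ has $k=\omega(H)$ vertices, so it is a maximum clique of $H$ and I may re-designate $B$ to be this clique; then $[A,B]=\emptyset$, the partition of $V(H)\setminus B$ into $Z,R_j,S_i^j,T_i$ from Section~2 is still in force, and all I shall use is that $A$ and $B$ are anticomplete cliques with $|A|=\omega\ge 3$ and $|B|=k\ge 3$.

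First I would extract the structural consequences of $(P_2\cup P_4)$-freeness. Each $S_i^j$ is stable by diamond-freeness. A short case analysis---forming an induced $P_4$ on two vertices of one clique and two cell vertices, and taking an anticomplete $P_2$ inside the other clique---shows that $S$ meets each row and each column at most once, so the nonempty cells are $S^{j}_{\sigma(j)}$ for $j$ in some $J\subseteq[k]$, with $\sigma\colon J\to[\omega]$ injective. The same $P_2\cup P_4$ device yields the anticompleteness relations $Z\perp(R\cup S\cup T)$ (so $Z$ detaches as an isolated perfect graph with $\omega(Z)\le k$), $R_i\perp R_j$ and $T_i\perp T_j$ for $i\neq j$, $R\perp T$, and the facts that $R_j$ is anticomplete to every $S_i^q$ with $q\neq j$ and $T_i$ to every $S_p^j$ with $p\neq i$. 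When $\omega\ge 4$ the argument has two spare vertices of $A$ available for the $P_2$, and then distinct cells are anticomplete as well.

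Next I would form clusters $M_j=T_{\sigma(j)}\cup R_j\cup S^{j}_{\sigma(j)}$ for each $j\in J$, leaving the unmatched $R_j$ and $T_i$ as their own clusters. Since the edge $a_sa_t$ with $s,t\neq\sigma(j)$ is anticomplete to $M_j$, Lemma~\ref{LD1} makes each $M_j$ perfect, and adjoining $a_{\sigma(j)}$ or $b_j$ to a clique of $M_j$ gives $\omega(M_j)\le\omega-1$. I would then colour $a_i\mapsto i$, send each matched $b_j$ to $\sigma(j)$ (permissible as $A\perp B$) and the unmatched $b_j$ to distinct leftover colours; since $a_{\sigma(j)}$ and $b_j$ share the colour $\sigma(j)$, every vertex of $M_j$ forbids precisely that colour, so $M_j$ is coloured from the other $\omega-1$ colours. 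For $\omega\ge 4$ the clusters are pairwise anticomplete and this finishes the proof.

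The main obstacle is $\omega=3$ (whence $k=3$), where distinct cells genuinely may be adjacent---an explicit diamond- and $(P_2\cup P_4)$-free $8$-vertex example shows the clusters need not detach---so the cell colours must be coordinated. Here I would first colour each cell monochromatically by a colour $\tau(j)\in\{1,2,3\}\setminus\{\sigma(j)\}$ with adjacent cells distinct; this is a system-of-distinct-representatives (derangement) problem on the at-most-three cells and is always solvable. To extend over $T_{\sigma(j)}$ and $R_j$ I would use a diamond argument: if a cell-neighbour $t_1\in T_{\sigma(j)}$ (say $t_1\sim v$ with $v\in S^j_{\sigma(j)}$) had a neighbour $t_2\in T_{\sigma(j)}$, then $\{a_{\sigma(j)},t_1,t_2,v\}$ would induce a diamond; hence cell-adjacent vertices are isolated inside $T_{\sigma(j)}$ (and symmetrically inside $R_j$) and may take the third colour, while the remaining bipartite part of $M_j$ is $2$-coloured from $\{1,2,3\}\setminus\{\sigma(j)\}$. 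Checking that this, together with the separately coloured $Z$, is a proper $\omega$-colouring completes the argument; I expect the bulk of the write-up to lie in the routine verification of the anticompleteness relations and of this $\omega=3$ coordination.
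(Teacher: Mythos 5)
Your argument is correct, but it takes a more roundabout route than the paper's. Both proofs re-designate $B$ as the hypothesised anticomplete $K_k$, keep the Section~2 partition, and establish the same row/column condition on $S$ together with the anticompleteness of $Z$, of distinct $R_j$'s, of distinct $T_i$'s, and of $R$ to $T$. The difference is that the paper pushes the $P_2\cup P_4$ device one step further and shows that $S$ is anticomplete to $R\cup T\cup Z$ \emph{in full}: for instance, if $t\in T_{\sigma(j)}$ were adjacent to $v\in S^{j}_{\sigma(j)}$, then $t$--$v$--$b_j$--$b_{j'}$ is an induced $P_4$ and any two vertices of $A\setminus\{a_{\sigma(j)}\}$ supply the anticomplete $P_2$, using only $\omega\ge3$ and the hypothesis that $A$ is anticomplete to $B$ (similarly for $R_j$ versus $S^j_{\sigma(j)}$, with the roles of $A$ and $B$ exchanged). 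Consequently your clusters $M_j$ collapse into pairwise anticomplete pieces $T_i$, $R_j$, $S^{j}_{\sigma(j)}$, and your diamond argument for ``cell-neighbours'' is vacuous. The paper also sidesteps your $\omega=3$ coordination entirely: it never needs distinct cells of $S$ to be anticomplete (your $8$-vertex example correctly shows they need not be when $\omega=k=3$), because it simply assigns the at most $k\le\omega$ nonempty cells pairwise distinct colours by a cyclic shift ($S_i^i\mapsto i+1$), which is proper across cells whether or not they are adjacent; no derangement or Hall-type selection is required. Your route is self-contained and buys no extra generality; the one imprecision worth fixing is that in your diamond argument $\{a_{\sigma(j)},t_1,t_2,v\}$ is a $K_4$ rather than a diamond when $t_2\sim v$, so that subcase should be closed by $\omega(G)=3$ instead.
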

\begin{proof}
Let $B$ be the $K_k$ that is anticomplete to $A$. Since $G$ is diamond-free, every $S_i^j$ is stable. We first list some properties of $Z$, $R$, $S$, and $T$. The proofs of these properties are analogous to those given in the proof of Claim~\ref{CD1}, and hence are omitted.
\medskip

\noindent(1) At most one set among $S_i^1,S_i^2,\ldots,S_i^k$ is nonempty, and at most one set among $S_1^j,S_2^j,\ldots,S_\omega^j$ is nonempty, for all $i\in [\omega]$ and $j\in [k]$.
\smallskip

\noindent(2) $Z$ is anticomplete to $R\cup S\cup T$.
\smallskip

\noindent(3) $R$ is anticomplete to $S\cup T\cup Z$.
\smallskip

\noindent(4) $R_i$ is anticomplete to $R_j$ for distinct $i,j\in [k]$.
\smallskip

\noindent(5) $T$ is anticomplete to $R\cup S\cup Z$
\smallskip

\noindent(6) $T_i$ is anticomplete to $T_j$ for distinct $i,j\in [\omega]$.
\smallskip

\noindent(7) $S$ is anticomplete to $R\cup T\cup Z$.
\medskip

By (1) and by symmetry, we may let $S=\bigcup_{i=1}^k S_{i}^i$. By Lemma~\ref{LD1}, each of the sets $R_j$, $T_i$, and $Z$ induces a perfect graph. Since $\omega(R_j), \omega(T_i) \leq \omega - 1$ and $\omega(Z) \le k\leq \omega$, we may colour them with at most $\omega - 1$, $\omega - 1$, and $\omega$ colours, respectively. We now present an $\omega$-colouring of $G$ as follows.

\begin{itemize}
\item Colour $a_i$ with colour $i$ for $i\in [\omega]$.
\item Colour $b_i$ with colour $i$ for $i\in [k]$.
\item By (7), if $k=1$, then colour $S_1^1$ with colour $2$.  
If $k\ge 2$, then colour $S_i^i$ with colour $i+1$ for $i\in [k-1]$, and colour $S_k^k$ with colour~1.
\item By (2), colour $Z$ with at most $\omega$  colours from $[\omega]$.
\item By (3) and (4), colour $R_i$ with at most $\omega-1$ colours from $[\omega]\setminus \{i\}$ for $i\in [k]$.
\item By (5) and (6), colour $T_i$ with at most $\omega-1$ colours from $[\omega]\setminus \{i\}$ for $i\in [\omega]$.
\end{itemize}

This completes the proof of Claim~\ref{C2}.
\end{proof}
By Claim~\ref{C2}, we may assume that
\begin{equation}\label{E2}
    \text{$H$ contains no $K_k$ whose vertex set is anticomplete to $A$.}
    \tag{$\ast\ast$}
\end{equation}

\section{The Case $k=3$} 

We continue with the notation and definitions introduced earlier. By Lemma~\ref{LD2}, if $\omega \ge 6$ and $k=3$, then $\chi(G)=\omega$.  
Hence, throughout this section we may assume that $\omega\in\{3,4,5\}$. 
The aim of this section is to prove the following three lemmas.

\begin{lemma}\label{L31}
If $G$ is a $(P_2\cup P_4,\text{ diamond})$-free graph with $\omega =3$ and $k=3$, then $\chi(G)\le 6$.  
\end{lemma}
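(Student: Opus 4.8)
The plan is to colour $G$ with the three colours $\{1,2,3\}$ on $A\cup B$ and a fresh palette $\{4,5,6\}$ on everything else, exploiting the refined partition $V(G)=A\cup B\cup R\cup S\cup T\cup Z$ together with the matching condition \eqref{E1} and \eqref{E2}. First I would colour $A\cup B$. Since $A$ and $B$ are triangles joined only by a matching, $\omega(G[A\cup B])=3$ (a $K_4$ would force two cross edges at a vertex); colouring $a_i$ with $i$ and choosing a derangement of $\{1,2,3\}$ on the matched vertices of $B$ (with extra freedom when the matching is partial) gives a proper $3$-colouring of $A\cup B$ using only $\{1,2,3\}$. The whole point is then to colour $R\cup S\cup T\cup Z$ using only $\{4,5,6\}$, so that no edge leaving $A\cup B$ can ever create a conflict.

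To make the fresh palette suffice, I would group the remaining vertices as $R\cup Z$ and $S\cup T$ and establish, in the spirit of Claims~\ref{CD1} and~\ref{C2}, the following facts. (i) $Z$ is stable: any edge inside $Z$ is anticomplete to $A\cup B$, which contains a $P_4$ by Lemma~\ref{L0}, producing a forbidden $P_2\cup P_4$. (ii) $R_1,R_2,R_3$ are pairwise anticomplete and each is anticomplete to $Z$; since each $R_j$ is perfect with $\omega(R_j)\le 2$, hence bipartite, $R\cup Z$ is bipartite and needs only two of $\{4,5,6\}$. (iii) At most one of $S_i^1,S_i^2,S_i^3$ and at most one of $S_1^j,S_2^j,S_3^j$ is nonempty, so the nonempty $S_i^j$ form a partial permutation (at most three stable sets); combined with $T_1,T_2,T_3$ being pairwise anticomplete, each $T_i$ bipartite, and $S$ anticomplete to $T$, this yields $\chi(S\cup T)\le 3$ (give each nonempty $S_i^j$ its own colour and two-colour $\bigsqcup_i T_i$, using $S$ anticomplete to $T$). (iv) The decisive relation: $R\cup Z$ is anticomplete to $S\cup T$.

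Granting (i)--(iv), I would finish by colouring $R\cup Z$ with $\{4,5\}$ and $S\cup T$ with $\{4,5,6\}$. This is proper: $A\cup B$ uses the disjoint set $\{1,2,3\}$, so every edge leaving $A\cup B$ is automatically fine; there are no edges between $R\cup Z$ and $S\cup T$ by (iv); and each block is properly coloured internally by (ii) and (iii). Hence $\chi(G)\le 6$, as required.

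The hard part will be (iv), together with the edge-free relations feeding (ii) and (iii). The clean witnesses in Claims~\ref{CD1} and~\ref{C2} relied on having two free vertices of $A$ (or of $B$) anticomplete to the putative edge, but \eqref{E1} only supplies a matching, so when that matching is small the $P_4$ of Lemma~\ref{L0} lying inside $A\cup B$ cannot be forced to avoid the one or two attachment vertices of an edge between, say, $R_j$ and $S_i^{j'}$. I therefore expect to build the witnessing $P_4$ partly from $A\cup B$ and partly from the offending vertices themselves (for an edge $uv$ with $u\in R_j$, $v\in S_i^{j'}$ one has the induced path $b_j\,u\,v\,a_i$ when $j\neq j'$), and then to exhibit a disjoint anticomplete edge, invoking \eqref{E2} to eliminate the degenerate configurations; a short case analysis on the size of $[A,B]$ seems unavoidable. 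It is precisely this structural work that is needed to reach the optimal bound: the naive alternative of colouring the perfect set $C_0\cup C_1$ and then bounding $\chi(C_2\cup C_3)$ via its triangle-freeness and Lemma~\ref{LP2P4} only delivers $3+4=7$ colours, so the anticompleteness in (iv), which lets the two leftover blocks share the palette $\{4,5,6\}$, is exactly what saves the final colour.
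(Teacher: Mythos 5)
Your plan rests on anticompleteness properties modelled on Claims~\ref{CD1} and~\ref{C2}, but several of them are simply false when $\omega=k=3$, not merely hard to prove. Concretely, your item (ii) fails: take $A=\{a_1,a_2,a_3\}$ and $B=\{b_1,b_2,b_3\}$ to be two triangles joined by the perfect matching $a_ib_i$, and add vertices $x,y$ with $N(x)=\{b_1,y\}$ and $N(y)=\{b_2,x\}$. This graph is diamond-free with $\omega=3$, satisfies \eqref{E1} and \eqref{E2}, has $x\in R_1$, $y\in R_2$ and $x\sim y$; and one can check (for each edge $uv$, the graph minus $N[u]\cup N[v]$ is $P_4$-free) that it is $(P_2\cup P_4)$-free. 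The reason your intended witnesses evaporate is exactly the difficulty you flag at the end: every induced $P_4$ through the edge $xy$ (e.g.\ $b_3b_1xy$ or $a_1b_1xy$) meets a vertex of $B$ matched into $A$, so no edge of $A\cup B$ is anticomplete to it --- with only three vertices in $A$ and a perfect matching to $B$, you never have the two ``spare'' vertices that Claims~\ref{CD1} and~\ref{C2} rely on (there $\omega\ge4$, or $B$ anticomplete to $A$). The same obstruction threatens your items (iii) and (iv), which are the load-bearing steps; since at least one of the asserted structural facts is false, the colouring scheme as described cannot be carried out, and you give no fallback.

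The paper avoids this minefield entirely: it never tries to prove that $R,S,T,Z$ are pairwise anticomplete in this case. Instead it works with the coarse partition $C_0,C_1,C_2,C_3$ and shows that at least one of $G[C_0\cup C_1]$, $G[C_0\cup C_2]$, $G[C_0\cup C_3]$ is triangle-free, by taking two triangles $X\subseteq C_0\cup C_i$ and $Y\subseteq C_0\cup C_j$ (each necessarily having exactly one vertex in $C_i$ resp.\ $C_j$, by \eqref{E1} and \eqref{E2}) and deriving a $P_2\cup P_4$ via Lemma~\ref{L0} with the spare edge taken inside $A$, which works because $X\cup Y$ avoids two vertices of $A$. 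It then colours that triangle-free perfect set $C_0\cup C_i$ with two colours reused from $A$ and gives the other two $C_j$'s two colours each, reaching $6$ without any of the fine anticompleteness you require. If you want to salvage your approach, you would at minimum have to replace (ii)--(iv) with substantially weaker (and true) statements and show the resulting blocks are still jointly $3$-colourable on a palette disjoint from $\{1,2,3\}$; as it stands the proposal does not prove the lemma.
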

\begin{lemma}\label{L32}
If $G$ is a $(P_2\cup P_4,\text{ diamond})$-free graph with $\omega =4$ and $k=3$, then $\chi(G)=4$. 
\end{lemma}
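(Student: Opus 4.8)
The plan is to prove $\chi(G)\le 4$, since $\chi(G)\ge\omega(G)=4$ is automatic, by exhibiting a $4$-colouring built on the partition $V(G)=A\cup B\cup R\cup S\cup T\cup Z$ with $|A|=4$ and $|B|=k=3$. First I would unpack the standing reductions. By $(\ast)$ the edges between $A$ and $B$ form a matching, and by $(\ast\ast)$, applied to the clique $B$ itself, this matching is nonempty. Every clique inside the $A$-anticomplete set $R\cup Z$ is a clique of $H$ anticomplete to $A$, so $(\ast\ast)$ gives $\omega(R\cup Z)\le 2$; together with Lemma~\ref{LD1} this makes $G[R\cup Z]$ perfect, hence $2$-colourable. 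I would likewise bound the sets $S_i^j$: each is stable, and if $a_i\sim b_j$ then two vertices of $S_i^j$ together with $a_i,b_j$ would form a diamond, so $|S_i^j|\le 1$ in that case.

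The core is a system of anticomplete relations among $R,S,T,Z$, in the style of items (2)--(8) of Claim~\ref{CD1}. The main tool is Lemma~\ref{L0}: applied to the cliques $A$ and $B$ (of sizes at least $3$ and $2$ after deleting prescribed vertices) it supplies an induced $P_4$ inside $A\cup B$ through chosen endpoints, and any edge $uv$ whose endpoints are anticomplete to such a $P_4$ produces a forbidden $P_2\cup P_4$. Because an external vertex meets $A$ in at most one vertex and $B$ in at most one, any two external vertices leave at least two vertices of $A$ and one of $B$ uncovered, and I would route the required $P_4$ through these uncovered vertices using a matching edge. This should yield that $Z$ is anticomplete to $R\cup S\cup T$, that $R$ and $T$ are anticomplete to the rest, and that distinct $R_j$'s and distinct $T_i$'s are mutually anticomplete, so that $Z$, each $R_j$, and each $T_i\cup\{a_i\}$ split off as independent perfect blocks.

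Granting these relations, the colouring is bookkeeping. Colour $a_i$ with $i$; colour $B$ with three distinct colours of $[4]$, each $b_j$ avoiding the colour of its matched partner (possible since $K_3$ has lists of size $\ge 3$ from $[4]$); colour each stable $S_i^j$ with a colour of $[4]\setminus\{c(a_i),c(b_j)\}$; colour each perfect block $T_i\cup\{a_i\}$ with at most $\omega=4$ colours, retaining $i$ on $a_i$ (here $\omega(T_i)\le 3$ keeps this inside $[4]$); and colour the $2$-colourable $R\cup Z$ so that each $R_j$ avoids the colour of $b_j$. The anticomplete relations ensure that blocks coloured by different rules never meet in a monochromatic edge, so $[4]$ suffices.

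The main obstacle is proving the anticomplete relations when the matching $[A,B]$ is small. If it is a single edge $a_1b_1$, then deleting $a_1$ or $b_1$ leaves $A\cup B$ a disjoint union of cliques with no induced $P_4$; hence every induced $P_4$ of $G[A\cup B]$ uses both $a_1$ and $b_1$, and the Lemma~\ref{L0} argument collapses exactly when the pair $\{u,v\}$ jointly covers $\{a_1,b_1\}$---most sharply when $u\in S_1^1$. I therefore expect the proof to branch on the number of matched pairs ($1$, $2$, or $3$), handling the degenerate branches by exploiting the size bound $|S_i^j|\le 1$ to recolour or absorb the offending vertices into an adjacent block. The delicate point is to confirm that these local repairs, carried out simultaneously over all blocks, never demand a fifth colour.
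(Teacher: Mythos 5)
Your proposal attempts to transplant the machinery of Section~4 (the partition $A\cup B\cup R\cup S\cup T\cup Z$ plus a system of anticomplete relations) into the case $k=3$, whereas the paper's proof of Lemma~\ref{L32} does something entirely different: it works only with the partition $C_0,C_1,\dots,C_4$ relative to $A$, and shows that at least one of $G[C_0\cup C_1\cup C_2]$, $G[C_0\cup C_3\cup C_4]$, $G[C_0\cup C_1\cup C_4]$ is $K_3$-free (otherwise two triangles $X,Y$ in complementary triples yield a $P_2\cup P_4$ via Lemma~\ref{L0}); a triangle-free triple and its complementary triple are each perfect of clique number at most $2$, so $2+2$ colours shared with $A$ suffice. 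The reason the paper avoids your route for $k=3$ is precisely the point where your sketch breaks down.

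The genuine gap is the treatment of $S$. Your colouring rule assigns each stable set $S_i^j$ a colour from $[4]\setminus\{c(a_i),c(b_j)\}$ independently, which implicitly requires distinct $S_i^p$ and $S_j^q$ to be anticomplete. This is false for $\omega=4$, $k=3$. Consider $A=\{a_1,\dots,a_4\}$ a $K_4$, $B=\{b_1,b_2,b_3\}$ a $K_3$ with the single matching edge $a_3b_3$, and two adjacent vertices $x\in S_1^1$, $y\in S_2^2$ (so $x\sim a_1,b_1$ and $y\sim a_2,b_2$). One can check this $9$-vertex graph is diamond-free and $(P_2\cup P_4)$-free: every induced $P_4$ through the edge $xy$ (e.g.\ $a_j\hbox{--}a_1\hbox{--}x\hbox{--}y$ or $b_3\hbox{--}b_1\hbox{--}x\hbox{--}y$) has the property that every remaining vertex either meets it or has all its neighbours meeting it --- the edge $a_3b_3$ is exactly what kills the candidate $P_2$'s $\{a_3,a_4\}$ and $\{b_3,\cdot\}$. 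So cross-edges inside $S$ genuinely occur, and your Lemma~\ref{L0} routing cannot exclude them: after deleting $N_{A\cup B}(x)\cup N_{A\cup B}(y)$ you are left with only $\{a_3,a_4\}$ and $\{b_3\}$, which is too small for Lemma~\ref{L0}. Note that even in the paper's $k\ge 4$ analysis, $S$ is \emph{not} a disjoint union of isolated blocks; properties (P5)--(P6) and a Hall's theorem argument are needed to handle cross-edges, and (P5) is proved using the vertex $b_4$, which does not exist here. Your proposed repair via $|S_i^j|\le 1$ applies only when $a_i\sim b_j$, i.e.\ to at most three of the twelve pairs $(i,j)$, so it does not close the gap. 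A similar (though ultimately surmountable) difficulty arises for edges between $T_i$ and $T_j$ when the matching $[A,B]$ misses $\{a_3,a_4\}$, where the $P_4$ must be routed through $A$ rather than through $A\cup B$ via Lemma~\ref{L0}; your sketch does not address this either. As written, the proposal does not yield a proof.
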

\begin{lemma}\label{L33}
If $G$ is a $(P_2\cup P_4,\text{ diamond})$-free graph with $\omega =5$ and $k=3$, then $\chi(G)=5$. 
\end{lemma}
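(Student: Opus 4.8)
The plan is to prove $\chi(G)\le 5$, since $\chi(G)\ge\omega(G)=5$ is automatic; so it suffices to produce a proper colouring that reuses only the palette $[5]$ already forced on the clique $A$. Throughout I would work with the partition $V(G)=A\cup B\cup R\cup S\cup T\cup Z$ and the two standing reductions $(\ast)$ and $(\ast\ast)$. The engine of the whole argument is Lemma~\ref{L0}: by $(\ast\ast)$ the matching $[A,B]$ is nonempty, and since $|A|=5\ge 3$ and $|B|=3\ge 2$, for every $a\in A$ and $b\in B$ there is an induced $P_4$ of $G[A\cup B]$ through both $a$ and $b$. Any edge of $G$ whose two endpoints are anticomplete to one of these induced $P_4$'s would complete a $P_2\cup P_4$, and this is exactly the contradiction I would exploit to kill unwanted edges.

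First I would record the cheap structural facts. The set $Z$ is stable: an edge inside $Z$ is anticomplete to all of $A\cup B$, hence to any induced $P_4$ provided by Lemma~\ref{L0}. Each $R_j$ is in fact bipartite, since $b_j$ is complete to $R_j$, so for every clique $Q\subseteq R_j$ the set $\{b_j\}\cup Q$ is a clique of $H$, whence $\omega(R_j)\le\omega(H)-1=k-1=2$. Each $T_i$ is perfect with $\omega(T_i)\le 3$ by Lemma~\ref{LD1}, and each $S_i^j$ is stable by diamond-freeness. The exchange arguments of Claim~\ref{CD1} then show that at most one of $S_i^1,S_i^2,S_i^3$ is nonempty for each row $i$. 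I would next run the analogous column analysis, and this is where the asymmetry $|A|=5>3=|B|$ first demands care, because deleting two vertices of $A$ leaves only a $K_3$, which carries no induced $P_4$ on its own, so the corresponding $P_4$'s must be recovered from $(A\setminus\{a_{i_1},a_{i_2}\})\cup B$ via Lemma~\ref{L0} and therefore require a surviving matching edge.

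The heart of the proof is then a case analysis on the number of edges of $[A,B]$ (which is $1$, $2$, or $3$) together with the nonemptiness pattern of the $S_i^j$, which by the row/column bounds is a partial permutation of the $5\times 3$ grid. After relabelling $A$ and $B$ so that the matched pairs are $a_jb_j$, I would prove the block-level anticomplete relations that mirror Claims~\ref{CD1} and~\ref{C2}: namely that $Z$, the $R_j$, the $T_i$, and the surviving $S_i^j$ are pairwise anticomplete apart from their forced incidences with $A$ and $B$. Each such relation follows by taking an edge between two blocks and deleting from $A\cup B$ the at most two vertices adjacent to its endpoints, then invoking Lemma~\ref{L0} on what remains — valid whenever a matching edge survives the deletion. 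The genuinely delicate sub-case is when $[A,B]$ has a single edge, so that deleting its endpoint destroys every induced $P_4$ living inside $A\cup B$; there I would instead locate the required induced $P_4$ among vertices of $H$ itself, applying Lemma~\ref{L0} inside $H$, where $B$ is a maximum clique.

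Finally I would assemble the colouring: set $c(a_i)=i$, colour $B$ by a palette compatible with the matching, and colour $Z$, the bipartite blocks $R_j$, the perfect blocks $T_i$, and the stable blocks $S_i^j$ from $[5]$, using the surviving anticomplete relations to keep the constraints local. The whole difficulty is to finish with five colours rather than six: Lemma~\ref{LD2} already gives $\chi(G)\le\max\{2k,\omega\}=6$, and the entire content of this lemma is the saving of that one colour. Concretely this reduces to a list-colouring problem in which each block $C_i$ must avoid colour $i$ while respecting the few cross-edges, and I expect to settle it by a Hall-type system of distinct representatives that simultaneously assigns the reused colours to the blocks of one half of the partition. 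I expect this final simultaneous assignment, rather than any single structural claim, to be the main obstacle, with the single-matching-edge sub-case as the most troublesome corner to verify.
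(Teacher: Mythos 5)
Your plan is not the paper's argument, and it contains a genuine gap at its central step. You propose to mirror the block decomposition of Section~4: establish that $Z$, the $R_j$, the $T_i$ and the surviving $S_i^j$ are pairwise anticomplete (apart from forced incidences) and then assemble a $5$-colouring by a Hall-type assignment. But the pairwise anticompleteness of the $S$-blocks is simply false when $k=3$. Concretely, take $A=\{a_1,\dots,a_5\}$ a $K_5$, $B=\{b_1,b_2,b_3\}$ a $K_3$ with matching $a_3b_1,\,a_5b_2,\,a_4b_3$, and two adjacent vertices $x,y$ with $N(x)=\{a_1,b_1,y\}$ and $N(y)=\{a_2,b_2,x\}$. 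One checks that this graph is $(P_2\cup P_4,\text{ diamond})$-free with $\omega=5$ and $k=3$, satisfies both $(\ast)$ and $(\ast\ast)$, and yet has $x\in S_1^1$ adjacent to $y\in S_2^2$. This is exactly why the paper proves properties (P5)--(P7) only under the hypothesis $k\ge 4$: their proofs need two vertices of $B$ outside the relevant columns to build the disjoint $P_4$, and with $k=3$ the single leftover vertex $b_3$ cannot supply the required $P_2$. You flag the ``single matching edge'' sub-case as delicate, but the failure above occurs even for a perfect matching $[A,B]$; and the concluding ``Hall-type system of distinct representatives'' is left entirely unspecified, so there is no evidence it can absorb these cross-adjacencies between $S$-blocks. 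As written, the proof does not go through.

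The paper's actual proof of Lemma~\ref{L33} avoids the fine partition altogether. It first observes that if some triple $C_i\cup C_j\cup C_l$ (with $i,j,l\ge 1$) induces a $K_3$-free graph, then a $5$-colouring follows immediately from Lemma~\ref{LD1}: colour that triple with two colours and the complementary part $C_0\cup C_{l'}\cup C_{l''}$ (perfect, clique number at most $3$) with the other three. Otherwise, it takes triangles $X\subseteq C_1\cup C_2\cup C_3$ and $Y\subseteq C_1\cup C_4\cup C_5$, uses $(\ast)$ and $(\ast\ast)$ to pin down where their vertices lie, and derives a $P_2\cup P_4$ from the pair $(X,Y)$ via Lemma~\ref{L0} --- the $P_2$ being supplied by an edge of $A$, which is always available since $|A|=5$. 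If you want to salvage your approach, you would need an analogue of (P5)/(P6) describing exactly which $S_i^p$--$S_j^q$ adjacencies can occur when $k=3$ and a worked-out list-colouring argument; the paper's triangle-location argument is considerably shorter.
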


\begin{proof}[\normalfont\textbf{Proof of Lemma~\ref{L31}}]

We first claim that if there exists $C_i$ among $C_1, C_2, C_3$ such that $\omega(C_0 \cup C_i) \le 2$, then $\chi(G) \le 6$. 
Without loss of generality, assume that $\omega(C_0 \cup C_1) \le 2$. 
Note that $\omega(C_2), \omega(C_3) \le 2$, and by Lemma~\ref{LD1}, each of $C_0 \cup C_1$, $C_2$, and $C_3$ induces a perfect graph. 
Thus, we may use at most two colours to colour each of $C_0 \cup C_1$, $C_2$, and $C_3$. 
Then we colour $a_1, a_2, a_3$ with colours 1, 2, and 3, respectively; 
colour $C_0 \cup C_1$ with colours from $\{2,3\}$, 
colour $C_2$ with colours from $\{1,4\}$, 
and colour $C_3$ with colours from $\{5,6\}$.

Hence, we may assume that for each $C_i$ among $C_1, C_2,$ and $C_3$, the subgraph $G[C_0 \cup C_i]$ contains a $K_3$. 
Since $k = 3$, we may regard $B$ as such a $K_3$. 
Then, by~\eqref{E1}, any $K_3$ in $G[C_0 \cup C_i]$ has at most one vertex in $C_i$. 
Moreover, by~\eqref{E2},  $G[C_0]$ is $K_3$-free. 
Therefore, every $K_3$ contained in $G[C_0 \cup C_i]$ consists of exactly one vertex in $C_i$ and two vertices in $C_0$.

Let $X \subseteq C_0 \cup C_1$ and $Y \subseteq C_0 \cup C_2$ be two vertex sets such that both $X$ and $Y$ induce a $K_3$. 
Let $X = \{x_0, x_0^\prime, x_1\}$ and $Y = \{y_0, y_0^\prime, y_2\}$, where $\{x_0, x_0^\prime, y_0, y_0^\prime\} \subseteq C_0$, $x_1 \in C_1$, and $y_2 \in C_2$. Since $k = 3$ and $G$ is diamond-free, $|X\cap Y|\leq 1$.

If $|X \cap Y| = 1$, then we may assume that $x_0 = y_0$ (see Figure~3(a)). 
Since $H$ is both $K_4$-free and diamond-free, $\{x_0^\prime, x_1\}$ is anticomplete to $\{y_0^\prime, y_2\}$. 
Hence, $\{x_0^\prime, x_1, y_0^\prime, y_2, a_2, a_3\}$ induces a $P_2 \cup P_4$, a contradiction.

If $|X \cap Y| = 0$ (see Figure~3(b)), then since $k = 3$ and $G$ is diamond-free, $[\{x_0, x_0^\prime\},Y]$ is a matching in $G$. 
If $\{x_0,x_0^\prime\}$ is anticomplete to $Y$, then $\{x_0,x_0^\prime,y_0,y_2,a_2,a_3\}$ induces a $P_2\cup P_4$, a contradiction. Hence, the matching $[\{x_0,x_0^\prime\},Y]$ is nonempty.
By Lemma~\ref{L0},  $G[\{x_0, x_0^\prime\} \cup Y]$ contains a $P_4$; the vertices of this $P_4$, together with $\{a_1, a_3\}$, induce a $P_2 \cup P_4$, a contradiction.

Thus, at least one of $G[C_0 \cup C_1]$ and $G[C_0 \cup C_2]$ is $K_3$-free, and hence $\chi(G) \le 6$.
\end{proof}

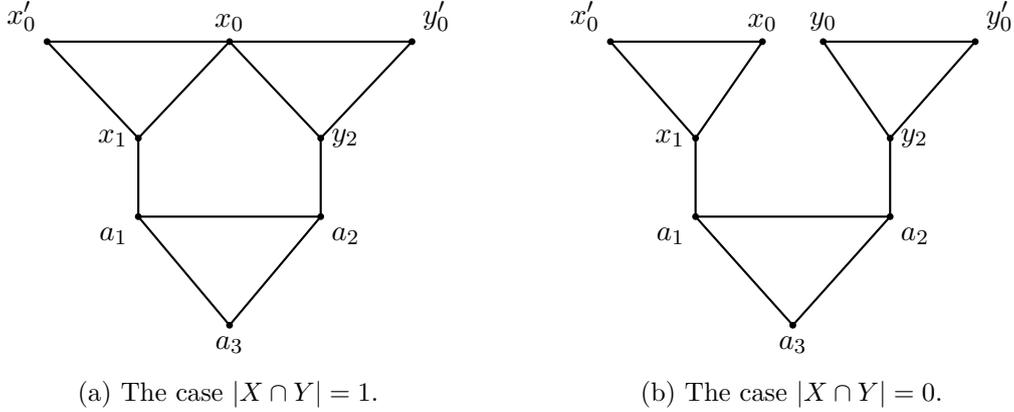
\begin{figure}[htbp]
\centering

\begin{subfigure}{0.44\linewidth}
\centering
\begin{tikzpicture}[scale=0.8, line width=0.8pt, every node/.style={font=\normalsize}]
  
  \coordinate (LX0p) at (-3, 2.5); 
  \coordinate (LX0)  at (0, 2.5);   
  \coordinate (LY0p) at (3, 2.5);      
  \coordinate (LX1)  at (-1.5, 0.9); 
  \coordinate (LY1)  at (1.5, 0.9);  
  \coordinate (LA1)  at (-1.5, -0.4);
  \coordinate (LA2)  at (1.5, -0.4); 
  \coordinate (LA3)  at (0, -2.2);   

  \draw (LX0p) -- (LX0) -- (LY0p);
  \draw (LX0p) -- (LX1) -- (LX0);
  \draw (LX0)  -- (LY1) -- (LY0p);
  \draw (LX1)  -- (LA1);
  \draw (LY1)  -- (LA2);
  \draw (LA1)  -- (LA2);
  \draw (LA1)  -- (LA3) -- (LA2);

  \foreach \v in {LX0p,LX0,LY0p,LX1,LY1,LA1,LA2,LA3}{\fill (\v) circle (1.6pt);}

  \node[above left]  at (LX0p) {$x_0'$};
  \node[above]       at (LX0)  {$x_0$};
  \node[above right] at (LY0p) {$y_0'$};
  \node[left]        at (LX1)  {$x_1$};
  \node[right]       at (LY1)  {$y_2$};
  \node[below left]  at (LA1)  {$a_1$};
  \node[below right] at (LA2)  {$a_2$};
  \node[below]       at (LA3)  {$a_3$};

\end{tikzpicture}
\caption{The case $|X\cap Y|=1$.}
\label{}
\end{subfigure}
\hspace{0.3em}
\begin{subfigure}{0.44\linewidth}
\centering
\begin{tikzpicture}[scale=0.8, line width=0.8pt, every node/.style={font=\normalsize}]
 
  \coordinate (RX0p) at (-3.0, 2.5); 
  \coordinate (RX0)  at (-0.5, 2.5); 
  \coordinate (RY0)  at (0.5, 2.5);   
  \coordinate (RY0p) at (3.0, 2.5);   

  \coordinate (RX1)  at (-1.6, 0.9);  
  \coordinate (RY2)  at ( 1.6, 0.9);  

  \coordinate (RA1)  at (-1.6, -0.4);
  \coordinate (RA2)  at ( 1.6, -0.4);
  \coordinate (RA3)  at ( 0.0, -2.2);

  \draw (RX0p) -- (RX0);
  \draw (RY0)  -- (RY0p);
  \draw (RX0p) -- (RX1) -- (RX0);
  \draw (RY0)  -- (RY2) -- (RY0p);
  \draw (RX1) -- (RA1);
  \draw (RY2) -- (RA2);
  \draw (RA1) -- (RA2);
  \draw (RA1) -- (RA3) -- (RA2);

  \foreach \v in {RX0p,RX0,RY0,RY0p,RX1,RY2,RA1,RA2,RA3}{\fill (\v) circle (1.6pt);}

  \node[above left]  at (RX0p) {$x_0'$};
  \node[above]       at (RX0)  {$x_0$};
  \node[above]       at (RY0)  {$y_0$};
  \node[above right] at (RY0p) {$y_0'$};

  \node[left]        at (RX1)  {$x_1$};
  \node[right]       at (RY2)  {$y_2$};

  \node[below left]  at (RA1)  {$a_1$};
  \node[below right] at (RA2)  {$a_2$};
  \node[below]       at (RA3)  {$a_3$};

\end{tikzpicture}
\caption{The case $|X\cap Y|=0$.}
\label{}
\end{subfigure}

\caption{Two cases in the proof of Lemma~\ref{L31}.}
\label{}
\end{figure}

\begin{proof}[\normalfont\textbf{Proof of Lemma~\ref{L32}}]
We first claim that if there exist $C_i, C_j$ among $C_1$, $C_2$, $C_3$, and $C_4$ such that $\omega(C_0 \cup C_i \cup C_j) \le 2$, then $\chi(G) = 4$. 
Without loss of generality, assume that $\omega(C_0 \cup C_1 \cup C_2) \le 2$. 
Since $k=3$, if $G[C_3\cup C_4]$ contains a $K_3$, then we may take this $K_3$ as $B$, which contradicts~\eqref{E1}. 
Hence, $G[C_3 \cup C_4]$ is $K_3$-free, and thus $\omega(C_3 \cup C_4) \le 2$. 
We may then colour $a_i$ with colour $i$ for $i \in [4]$, 
colour $C_0 \cup C_1 \cup C_2$ with colours from $\{3,4\}$, 
and colour $C_3 \cup C_4$ with colours from $\{1,2\}$.

Thus, we may assume that for any $C_i, C_j$ among $C_1$, $C_2$, $C_3$, and $C_4$, 
the subgraph $G[C_0 \cup C_i \cup C_j]$ contains a $K_3$. 
Since $k = 3$, we may regard $B$ as such a $K_3$. 
By~\eqref{E1}, any $K_3$ in $G[C_0 \cup C_i \cup C_j]$ has at most one vertex in $C_i$ 
and at most one vertex in $C_j$. 
Moreover, by~\eqref{E2}, the subgraph $G[C_0]$ is $K_3$-free. 
Therefore, every $K_3$ contained in $G[C_0 \cup C_i \cup C_j]$ contains at least one vertex in $C_0$ and at most two vertices in $C_0$. Let $X \subseteq C_0 \cup C_1 \cup C_2$ be a vertex set such that $G[X]$ is a $K_3$. 

If $|X \cap C_0| = 2$, then by symmetry we may assume that $|X \cap C_1| = 1$ 
(the case $|X \cap C_2| = 1$ is analogous). Let $X=\{x_0,x_0^\prime,x_1\}$, where $x_0,x_0^\prime \in C_0$ and $x_1\in C_1$. Now consider a vertex set $Y\subseteq C_0\cup C_3\cup C_4$ that induces a $K_3$. By symmetry, we may assume that $|Y\cap C_4|=1$ (the case $|Y \cap C_3| = 1$ is analogous). 
Let $Y=\{y_0,y_4,y\}$, where $y_0\in C_0$, $y_4\in C_4$, and $y\in C_0$ or $C_3$. Since $k=3$ and  $G$ is diamond-free, we have $|X\cap Y|\leq 1$. 

For the case $|X\cap Y|=1$, we may assume that $x_0=y_0$ (see Figure~4(a)). Since $H$ is $K_4$-free and diamond-free, $\{x_0^\prime,x_1\}$ is anticomplete to $\{y,y_4\}$. Thus, $\{x_0^\prime,x_1,y,y_4,a_4,a_2\}$ induces a $P_2\cup P_4$, a contradiction. For the case $|X\cap Y|=0$ (see Figure~4(b)), since $k=3$ and $G$ is diamond-free, $[\{x_0,x_0^\prime\},Y]$ is a matching in $G$. If $\{x_0,x_0^\prime\}$ is anticomplete to $Y$, then $\{x_0,x_0^\prime,y_0,y_4,a_4,a_2\}$ induces a $P_2\cup P_4$, a contradiction. Hence, $[\{x_0,x_0^\prime\},Y]$ is a nonempty matching in $G$. By Lemma~\ref{L0}, $G[\{x_0,x_0^\prime\}\cup Y]$ contains a $P_4$. The vertices of this $P_4$, together with $\{a_1, a_2\}$, induce a $P_2 \cup P_4$, a contradiction.

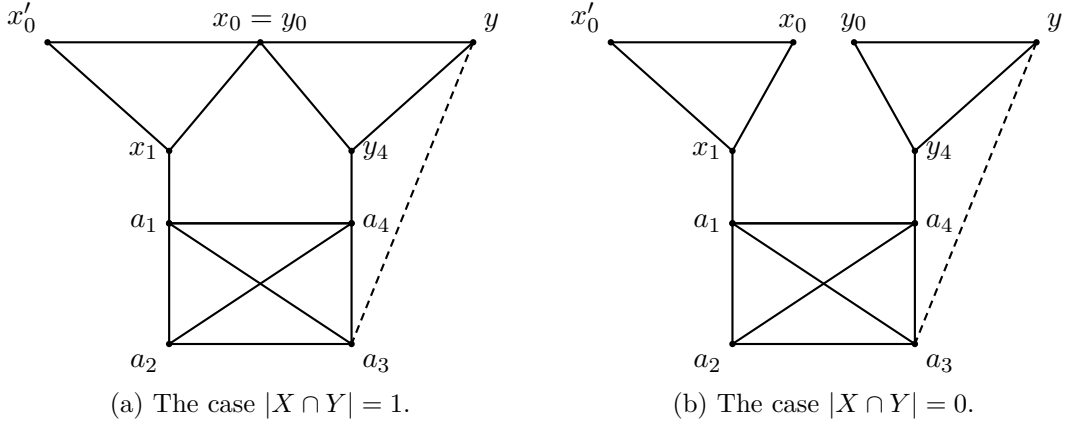
\begin{figure}[htbp]
\centering

\begin{subfigure}{0.44\linewidth}
\centering
\begin{tikzpicture}[scale=0.8, line width=0.8pt, every node/.style={font=\normalsize}]

\path[use as bounding box] (0.5,0.5) rectangle (8.5,6.5);

  \coordinate (LX0p) at (1, 6);
  \coordinate (LXY0) at (4.5,  6);
  \coordinate (LY)   at (8,  6);
  \coordinate (LX1)  at (3, 4.2);
  \coordinate (LY4)  at (6, 4.2);
  \coordinate (LMidL) at (3, 3);
  \coordinate (LMidR) at (6, 3);
  \coordinate (LA2) at (3, 1);
  \coordinate (LA3) at (6, 1);
  \coordinate (LA1) at (3,  3);
  \coordinate (LA4) at (6,  3);

  \draw (LX0p) -- (LXY0) -- (LY);
  \draw (LX0p) -- (LX1) -- (LXY0);
  \draw (LXY0) -- (LY4) -- (LY);
  \draw (LX1) -- (LMidL);
  \draw (LY4) -- (LMidR);
  \draw (LMidL) -- (LMidR);
  \draw (LA2) -- (LA1) -- (LA4) -- (LA3) -- (LA2);
  \draw (LA1) -- (LA3);
  \draw (LA2) -- (LA4);
  
  \draw[densely dashed] (LY) -- (LA3);

  \foreach \v in {LX0p,LXY0,LY,LX1,LY4,LMidL,LMidR,LA1,LA2,LA3,LA4}{
    \fill (\v) circle (1.5pt);
  }

  \node[above left]  at (LX0p) {$x_0'$};
  \node[above]       at (LXY0) {$x_0 = y_0$};
  \node[above right] at (LY)   {$y$};
  \node[left]        at (LX1)  {$x_1$};
  \node[right]       at (LY4)  {$y_4$};
  \node[left]        at (LA1)  {$a_1$};
  \node[right]       at (LA4)  {$a_4$};
  \node[below left]  at (LA2)  {$a_2$};
  \node[below right] at (LA3)  {$a_3$};
\end{tikzpicture}

\caption{The case $|X\cap Y|=1$.}
\label{}
\end{subfigure}
\hspace{0.3em}
\begin{subfigure}{0.44\linewidth}
\centering
\begin{tikzpicture}[scale=0.8, line width=0.8pt, every node/.style={font=\normalsize}]

\path[use as bounding box] (0.5,0.5) rectangle (8.5,6.5);

    \coordinate (RX0p) at (1, 6);
    \coordinate (RX0)  at (4, 6);
    \coordinate (RX1)  at (3, 4.2);
    \coordinate (RY0)  at (5, 6);
    \coordinate (RY)   at (8, 6);
    \coordinate (RY4)  at (6, 4.2);
    \coordinate (RMidL) at (3, 3);
    \coordinate (RMidR) at (6, 3);
    \coordinate (RA2)   at (3, 1);
    \coordinate (RA3)   at ( 6, 1);
    \coordinate (RA1)   at (3,  3);
    \coordinate (RA4)   at (6,  3);

    \draw (RX0p) -- (RX0);
    \draw (RX0p) -- (RX1) -- (RX0);
    \draw (RY0) -- (RY);
    \draw (RY0) -- (RY4) -- (RY);
    \draw (RX1) -- (RMidL);
    \draw (RY4) -- (RMidR);
    \draw (RMidL) -- (RMidR);
    \draw (RA2) -- (RA1) -- (RA4) -- (RA3) -- (RA2);
    \draw (RA1) -- (RA3);
    \draw (RA2) -- (RA4);
    
    \draw[densely dashed] (RY) -- (RA3);

    \foreach \v in {RX0p,RX0,RX1,RY0,RY,RY4,RMidL,RMidR,RA1,RA2,RA3,RA4}{
      \fill (\v) circle (1.5pt);
    }

    \node[above left]  at (RX0p) {$x_0'$};
    \node[above]       at (RX0)  {$x_0$};
    \node[left]        at (RX1)  {$x_1$};
    \node[above]       at (RY0)  {$y_0$};
    \node[above right] at (RY)   {$y$};
    \node[right]       at (RY4)  {$y_4$};
    \node[left]        at (RA1)  {$a_1$};
    \node[right]       at (RA4)  {$a_4$};
    \node[below left]  at (RA2)  {$a_2$};
    \node[below right] at (RA3)  {$a_3$};
\end{tikzpicture}

\caption{The case $|X\cap Y|=0$.}
\label{}
\end{subfigure}

\caption{The case $|X\cap C_0|=2$ in the proof of Lemma~\ref{L32}. Here a dashed line indicates that the two vertices may or may not be adjacent.}
\label{}
\end{figure}

If $|X \cap C_0| = 1$, consider a vertex set $Y \subseteq C_0 \cup C_1 \cup C_4$ that induces a $K_3$. If $|Y \cap C_0| = 2$, then by the arguments in the preceding paragraphs, we deduce that $C_0 \cup C_2 \cup C_3$ is $K_3$-free, a contradiction. Hence, $|Y \cap C_0| = 1$. Let $X=\{x_0,x_1,x_2\}$ and $Y=\{y_0,y_1,y_4\}$, where $x_0,y_0\in C_0$, $x_1,y_1\in C_1$, $x_2\in C_2$, and $y_4\in C_4$. Since $k=3$ and  $G$ is diamond-free, we have $|X\cap Y|\leq 1$. 

For the case $|X \cap Y| = 1$, either $x_0 = y_0$ or $x_1 = y_1$. If $x_0=y_0$ (see Figure~5(a)), then $\{x_1,x_2\}$ is anticomplete to $\{y_1,y_4\}$ since $H$ is $K_4$-free and diamond-free. Thus, $\{x_1,x_2,y_1,y_4,a_4,a_3\}$ induces a $P_2\cup P_4$, a contradiction. If $x_1=y_1$ (see Figure~5(b)), then $\{x_0,x_2\}$ is anticomplete to $\{y_0,y_4\}$ since $H$ is $K_4$-free and diamond-free. Thus, $\{x_0,x_2,y_0,y_4,a_4,a_3\}$ induces a $P_2\cup P_4$, a contradiction. For the case $|X\cap Y|=0$ (see Figure~5(c)), since $k=3$ and $G$ is diamond-free, $[\{x_0,x_1\},Y]$ is a matching in $G$. 
If $\{x_0,x_1\}$ is anticomplete to $Y$, then $\{x_0,x_1,y_0,y_4,a_4,a_3\}$ induces a $P_2\cup P_4$, a contradiction. Hence, $[\{x_0,x_1\},Y]$ is a nonempty matching in $G$. By Lemma~\ref{L0}, $G[\{x_0,x_1\}\cup Y]$ contains a $P_4$. The vertices of this $P_4$, together with $\{a_2, a_3\}$, induce a $P_2 \cup P_4$, a contradiction.

\begin{figure}[htbp]
\centering

\begin{subfigure}{0.32\linewidth}
\centering
\resizebox{\linewidth}{!}{%
\begin{tikzpicture}[line width=0.8pt, every node/.style={font=\Large}]

\path[use as bounding box] (0.8,0.5) rectangle (8.2,6.5);

\coordinate (X2)  at (1.5,6);
\coordinate (X0)  at (4.5,6);
\coordinate (Y4)  at (7.5,6);
\coordinate (X1)  at (3,4.2);
\coordinate (Y1)  at (6,4.2);
\coordinate (A1)  at (3,3);
\coordinate (A4)  at (6,3);
\coordinate (A2)  at (3,1);
\coordinate (A3)  at (6,1);

\draw (X2) -- (X0) -- (Y4);
\draw (X2) -- (A2);
\draw (X1) -- (X2);
\draw (Y4) -- (A4);
\draw (X1) -- (X0) -- (Y1);
\draw (X1) -- (A1) -- (A2);
\draw (Y1) -- (Y4);
\draw (A1) -- (A4);
\draw (A2) -- (A4);
\draw (A1) -- (A3);
\draw (A1) -- (Y1);
\draw (A2) -- (A3);
\draw (A3) -- (A4);

\foreach \p in {X2,X0,Y4,X1,Y1,A1,A2,A3,A4}{
  \fill (\p) circle (2pt);
}

\node[above left]  at (X2) {$x_2$};
\node[above]       at (X0) {$x_0 = y_0$};
\node[above right] at (Y4) {$y_4$};
\node[left]        at (X1) {$x_1$};
\node[right]       at (Y1) {$y_1$};
\node[left]        at (A1) {$a_1$};
\node[below left]  at (A2) {$a_2$};
\node[below right] at (A3) {$a_3$};
\node[right]       at (A4) {$a_4$};

\end{tikzpicture}%
}
\caption{The case $x_0=y_0$.}
\end{subfigure}
\hspace{0.5em}
\begin{subfigure}{0.32\linewidth}
\centering
\resizebox{\linewidth}{!}{%
\begin{tikzpicture}[line width=0.8pt, every node/.style={font=\Large}]

\path[use as bounding box] (0.8,0.5) rectangle (8.2,6.5);

\coordinate (X2)  at (1.5,6);
\coordinate (X1)  at (4.5,6);
\coordinate (Y0)  at (7.5,6);
\coordinate (X0)  at (3,4.2);
\coordinate (Y4)  at (6,4.2);
\coordinate (A1)  at (3,3);
\coordinate (A4)  at (6,3);
\coordinate (A2)  at (3,1);
\coordinate (A3)  at (6,1);

\draw (X1) -- (A1);
\draw (X2) -- (X1) -- (Y0);
\draw (X2) -- (A2);
\draw (X0) -- (X2);
\draw (X0) -- (X1) -- (Y4);
\draw (A1) -- (A2);
\draw (Y4) -- (Y0);
\draw (A1) -- (A4);
\draw (A2) -- (A4);
\draw (A1) -- (A3);
\draw (A2) -- (A3);
\draw (A3) -- (A4);
\draw (A4) -- (Y4);

\foreach \p in {X2,X0,Y4,X1,Y0,A1,A2,A3,A4}{
  \fill (\p) circle (2pt);
}

\node[above left]  at (X2) {$x_2$};
\node[above]       at (X1) {$x_1 = y_1$};
\node[above right] at (Y0) {$y_0$};
\node[left]        at (X0) {$x_0$};
\node[right]       at (Y4) {$y_4$};
\node[left]        at (A1) {$a_1$};
\node[below left]  at (A2) {$a_2$};
\node[below right] at (A3) {$a_3$};
\node[right]       at (A4) {$a_4$};

\end{tikzpicture}%
}
\caption{The case $x_1=y_1$.}
\end{subfigure}%
\hspace{0.5em}%
\begin{subfigure}{0.32\linewidth}
\centering
\resizebox{\linewidth}{!}{%
\begin{tikzpicture}[line width=0.8pt, every node/.style={font=\Large}]

\path[use as bounding box] (0.8,0.5) rectangle (8.2,6.5);

\coordinate (A1) at (3,3); 
\coordinate (A4) at (6,3); 
\coordinate (A2) at (3,1); 
\coordinate (A3) at (6,1);

\coordinate (X2) at (1.5,6);      
\coordinate (X0) at (4,6);      
\coordinate (Y0) at (5,6);      
\coordinate (Y4) at (7.5,6);      

\coordinate (X1) at (3,4.2);    
\coordinate (Y1) at (6,4.2);    

\draw (X2) -- (X0);
\draw (Y0) -- (Y4);
\draw (X2) -- (X1);
\draw (X1) -- (X0);
\draw (X2) -- (A2);   
\draw (X1) -- (A1);   
\draw (Y0) -- (Y1);
\draw (Y1) -- (Y4);
\draw (Y4) -- (A4);   
\draw (A1) -- (A4);   
\draw (A2) -- (A3);   
\draw (A1) -- (A2);   
\draw (A4) -- (A3);   
\draw (A1) -- (A3);  
\draw (A2) -- (A4);  
\draw (A1) -- (Y1);

\foreach \p in {X2,X0,Y0,Y4,X1,Y1,A1,A2,A3,A4}{
\fill (\p) circle (2pt);
}

\node[above left]  at (X2) {$x_2$};
\node[above]       at (X0) {$x_0$};
\node[above]       at (Y0) {$y_0$};
\node[above right] at (Y4) {$y_4$};
\node[left]        at (X1) {$x_1$};
\node[right]       at (Y1) {$y_1$};
\node[left]        at (A1) {$a_1$};
\node[below left]  at (A2) {$a_2$};
\node[below right] at (A3) {$a_3$};
\node[right]       at (A4) {$a_4$};

\end{tikzpicture}%
}
\caption{The case $|X\cap Y|=0$.}
\end{subfigure}

\caption{The case $|X\cap C_0|=1$ in the proof of Lemma~\ref{L32}.}
\label{}
\end{figure}
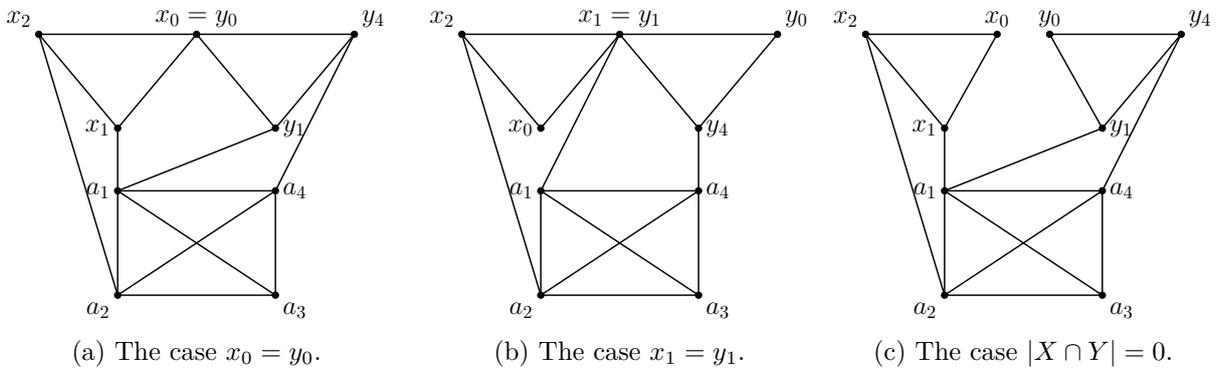

Thus, at least one of $G[C_0 \cup C_1\cup C_2]$, $G[C_0 \cup C_3\cup C_4]$, and $G[C_0 \cup C_1\cup C_4]$ is $K_3$-free, and hence $\chi(G) =4$. \end{proof}

\begin{proof}[\normalfont\textbf{Proof of Lemma~\ref{L33}}]
We first show that if there exist three sets among $C_1, \dots, C_5$ such that the subgraph induced by their union is $K_3$-free, then $\chi(G) = 5$. Without loss of generality, suppose that $G[C_1\cup C_2\cup C_3]$ is $K_3$-free. Then $\omega(C_1\cup C_2\cup C_3)\leq 2$. By Lemma~\ref{LD1}, both $C_1\cup C_2\cup C_3$ and $C_0\cup C_4\cup C_5$ induce perfect graphs. Thus, we may colour $a_i$ with $i$ for $i\in [5]$, colour $C_1\cup C_2\cup C_3$ with colours from $\{4,5\}$, and colour $C_0\cup C_4\cup C_5$ with colours from $\{1,2,3\}$.

Hence, we may assume that for any three sets among $C_1, \dots, C_5$, the subgraph induced by their union contains a $K_3$. Since $k=3$, we may regard $B$ as such a $K_3$. Then, by~\eqref{E1}, each such $K_3$ has its three vertices belonging to three different sets among $C_1, \dots, C_5$. Let $X = \{x_1, x_2, x_3\} \subseteq C_1 \cup C_2 \cup C_3$ and 
$Y = \{y_1, y_2, y_3\} \subseteq C_1 \cup C_4 \cup C_5$ 
be two vertex sets such that both $X$ and $Y$ induce a $K_3$, 
where $x_i \in C_i$ for $i \in \{1, 2, 3\}$ and 
$y_j \in C_j$ for $j \in \{1, 4, 5\}$.

If $x_1 = y_1$ (see Figure~6(a)), since $k \le 3$ and $G$ is diamond-free, $\{x_2,x_3\}$ is anticomplete to $\{y_4,y_5\}$. Hence, $\{x_2,x_3,y_4,y_5,a_5,a_1\}$ induces a $P_2\cup P_4$, a contradiction.

If $x_1 \ne y_1$ (see Figure~6(b)), since $k \le 3$ and $G$ is diamond-free, $[\{x_1, x_2\}, Y]$ is a matching in $G$.
If $\{x_1,x_2\}$ is anticomplete to $Y$, then $\{x_1,x_2,y_1,y_5,a_5,a_4\}$ induces a $P_2\cup P_4$, a contradiction. Hence, the matching $[\{x_1,x_2\}, Y]$ is nonempty. By Lemma~\ref{L0}, $G[\{x_1,x_2\}\cup Y]$ contains a $P_4$ whose vertex set contains $y_1$. Thus, $\{x_1, x_2, y_1, y_4\}$ or $\{x_1, x_2, y_1, y_5\}$ induces a $P_4$; hence, $\{a_3, a_5, x_1, x_2, y_1, y_4\}$ or $\{a_3, a_4, x_1, x_2, y_1, y_5\}$ induces a $P_2 \cup P_4$. In either case, we obtain a contradiction.

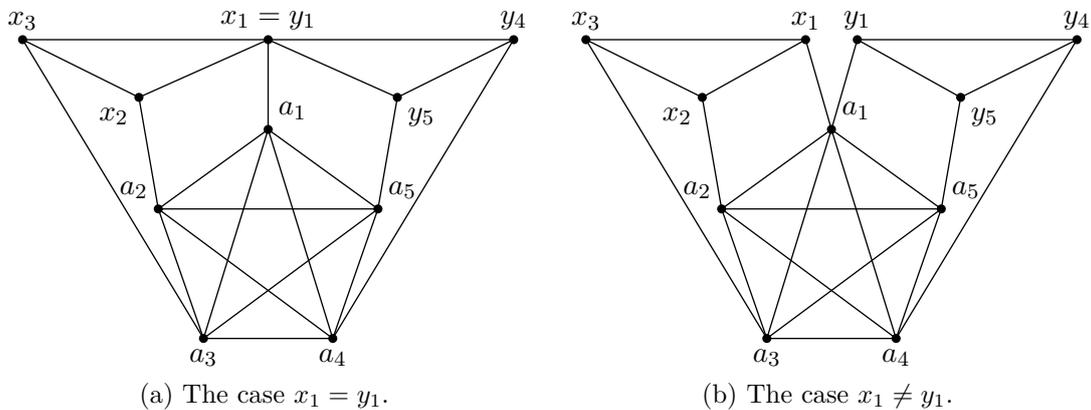
\begin{figure}[htbp]
\centering
\begin{subfigure}{0.44\linewidth}
\centering
\begin{tikzpicture}[scale=1.7, line width=0.5pt, every node/.style={font=\normalsize}]

\path[use as bounding box] (-2.1,-1.43) rectangle (2.1,1.3);

\path ( 0.00, 0.40) coordinate (a1);
\path ( 0.85, -0.22) coordinate (a5);
\path ( 0.5,-1.23) coordinate (a4);
\path (-0.5,-1.23) coordinate (a3);
\path (-0.85, -0.22) coordinate (a2);
\path (-1.9,1.10) coordinate (x3);
\path ( 0.00,1.10) coordinate (x1); 
\path ( 1.9,1.10) coordinate (y4);
\path (-1,0.65) coordinate (x2);
\path ( 1,0.65) coordinate (y5);

\draw (x3)--(x1)--(y4);
\draw (x3)--(x2)--(a2);
\draw (x3)--(a3); 
\draw (y4)--(y5)--(a5);
\draw (y4)--(a4);
\draw (x1)--(a1);
\draw (x2)--(x1);
\draw (y5)--(x1);

\foreach \A/\i in {a1/1,a2/2,a3/3,a4/4,a5/5}{
\foreach \B/\j in {a1/1,a2/2,a3/3,a4/4,a5/5}{
\ifnum\i<\j
\draw (\A)--(\B);
\fi
}
}
\foreach \p in {x2,x3,y5,y4,x1,a1,a2,a3,a4,a5}
        {
        \fill (\p) circle (1pt);
        }
        
\node[above] at (x3) {$x_3$};
\node[above] at (x1) {$x_1=y_1$};
\node[above] at (y4) {$y_4$};
\node[below right] at (y5) {$y_5$};
\node[below left] at (x2) {$x_2$};
\node[above right] at (a1) {$a_1$};
\node[above left] at (a2) {$a_2$};
\node[below] at (a3) {$a_3$};
\node[below] at (a4) {$a_4$};
\node[above right] at (a5) {$a_5$};

\end{tikzpicture}

\caption{The case $x_1=y_1$.}
\label{}
\end{subfigure}
\hspace{0.3em}
\begin{subfigure}{0.44\linewidth}
\centering
\begin{tikzpicture}[scale=1.7, line width=0.5pt, every node/.style={font=\normalsize}]

\path[use as bounding box] (-2.1,-1.43) rectangle (2.1,1.3);

\path ( 0.00, 0.40) coordinate (a1);
\path ( 0.85, -0.22) coordinate (a5);
\path ( 0.5,-1.23) coordinate (a4);
\path (-0.50,-1.23) coordinate (a3);
\path (-0.85, -0.22) coordinate (a2);
\path (-1.9,1.10) coordinate (x3);
\path ( 0.2,1.1) coordinate (y1); 
\path (-0.2,1.1) coordinate (x1);
\path ( 1.9,1.10) coordinate (y4);
\path (-1,0.65) coordinate (x2);
\path (1,0.65) coordinate (y5);

\draw (x3)--(x1);
\draw (x3)--(x2)--(a2);
\draw (x3)--(a3); 
\draw (y4)--(y5)--(a5);
\draw (y4)--(a4);
\draw (x1)--(a1);
\draw (y1)--(a1);
\draw (y1)--(y4);
\draw (y1)--(y5);
\draw (x2)--(x1);

\foreach \A/\i in {a1/1,a2/2,a3/3,a4/4,a5/5}{
\foreach \B/\j in {a1/1,a2/2,a3/3,a4/4,a5/5}{
\ifnum\i<\j
\draw (\A)--(\B);
\fi
}
}
\foreach \p in {x2,x3,y5,y4,x1,y1,a1,a2,a3,a4,a5}
        {
        \fill (\p) circle (1pt);
        }
        
\node[above] at (x3) {$x_3$};
\node[above] at (x1) {$x_1$};
\node[above] at (y1) {$y_1$};
\node[above] at (y4) {$y_4$};
\node[below right] at (y5) {$y_5$};
\node[below left] at (x2) {$x_2$};
\node[above right] at (a1) {$a_1$};
\node[above left] at (a2) {$a_2$};
\node[below] at (a3) {$a_3$};
\node[below] at (a4) {$a_4$};
\node[above right] at (a5) {$a_5$};

\end{tikzpicture}
\caption{The case $x_1\neq y_1$.}
\label{}

\end{subfigure}
\caption{Two cases in the proof of Lemma~\ref{L33}.}
\label{}
\end{figure}

Thus, at least one of $G[C_1\cup C_2\cup C_3]$ and $G[C_1\cup C_4\cup C_5]$ is $K_3$-free, and hence  $\chi(G) =5$. \end{proof}

\section{The Case $k\geq 4$}

In this case, $\omega \geq k\geq4$. This section aims to prove the following lemma.

\begin{lemma}\label{L41}
If $G$ is a ($P_2\cup P_4$, diamond)-free graph with $\omega \geq 4$ and $k\geq 4$, then $\chi(G)=\omega$.  
\end{lemma}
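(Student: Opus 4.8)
The plan is to follow the template established in Claims~\ref{CD1} and~\ref{C2}: first derive strong anticomplete relations among the blocks $R_j$, $S_i^j$, $T_i$, and $Z$ of the partition of $V(H)\setminus B$, and then assemble an explicit $\omega$-colouring from these relations. Since we are in the regime $\omega\ge k\ge 4$, by~\eqref{E1} the edges between $A$ and $B$ form a matching; after relabelling I may assume $a_i\sim b_i$ for the matched indices and that all other $A$--$B$ pairs are non-adjacent. By~\eqref{E2}, $B$ is not anticomplete to $A$, so this matching is nonempty. This is the only structural difference from the situation of Claim~\ref{C2} (where $B$ was anticomplete to $A$), and the whole difficulty lies in the loss of symmetry caused by these matching edges.

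First I would establish the sparsity of $S$: for each fixed $i$ at most one of $S_i^1,\dots,S_i^k$ is nonempty, and for each fixed $j$ at most one of $S_1^j,\dots,S_\omega^j$ is nonempty, exactly as in part~(2) of the proof of Claim~\ref{CD1}. After relabelling this lets me write $S$ as a single transversal $\bigcup_i S_{\sigma(i)}^{\,i}$. Next I would prove that the blocks are pairwise anticomplete: $Z$ is anticomplete to $R\cup S\cup T$; $R$ is anticomplete to $S\cup T\cup Z$ with $R_i$ anticomplete to $R_j$ for $i\ne j$; $T$ is anticomplete to $R\cup S\cup Z$ with $T_i$ anticomplete to $T_j$ for $i\ne j$; and $S$ is anticomplete to $R\cup T\cup Z$. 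Each such statement is proved by assuming a cross-edge $x\sim y$, building an induced $P_4$ from this edge together with two vertices of $B$ (say $b_j,b_{j'}$) or of $A$ (say $a_i,a_{i'}$), and then exhibiting a $P_2$ inside the other clique that is anticomplete to this $P_4$; the existence of such a $P_2$ is what forces $x\not\sim y$, contradicting the $(P_2\cup P_4)$-freeness of $G$.

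The colouring then proceeds as in the two earlier claims. Colour $a_i$ with $i$; since $A\cup B$ is the union of two cliques joined by a matching, it is chordal with clique number $\omega$, hence admits a proper $\omega$-colouring $c$ extending $a_i\mapsto i$, which I use to colour $B$. Each stable set $S_{\sigma(i)}^{\,i}$ is adjacent inside $A\cup B$ only to $a_{\sigma(i)}$ and $b_i$, so it can be given a single colour avoiding those two. By Lemma~\ref{LD1} each of $R_j$, $T_i$, and $Z$ is perfect, with $\omega(R_j),\omega(T_i)\le\omega-1$ and $\omega(Z)\le k\le\omega$; since these blocks are mutually anticomplete and each meets $A\cup B$ in a controlled way (only $b_j$ for $R_j$, only $a_i$ for $T_i$, and nothing for $Z$), they can be coloured from the palettes $[\omega]\setminus\{c(b_j)\}$, $[\omega]\setminus\{i\}$, and $[\omega]$ respectively. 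This yields $\chi(G)\le\omega$, and the reverse inequality is immediate from the clique $A$, so $\chi(G)=\omega$.

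The hard part will be verifying the anticomplete relations in the tight case $\omega=k=4$. Here the naive count of forbidden vertices in $A$ (or in $B$) can be as large as three, leaving only one spare vertex, whereas a $P_2$ needs two. The remedy is to choose the second auxiliary vertex of the $P_4$ so that its matched partner coincides with a vertex already forbidden: for instance, to rule out an edge $x\sim y$ with $x\in Z$ and $y\in S_i^{\,j}$ ($i\ne j$) in the fully matched case, I take the $P_4$ to be $\{x,y,a_i,a_j\}$, whose only $B$-neighbours are then $b_i$ and $b_j$, leaving the remaining $k-2=2$ vertices of $B$ as the required $P_2$. Making such choices uniformly across all block pairs, and handling the few blocks that share an index (such as $T_i$ and $S_i^{\,j}$, or $R_j$ and $S_i^{\,j}$, where Lemma~\ref{L0} is invoked to locate the $P_4$), is the main bookkeeping obstacle; everything else follows the now-familiar pattern.
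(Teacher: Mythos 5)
Your overall architecture (anticomplete relations among the blocks, then an explicit $\omega$-colouring, with $A\cup B$ coloured via a system of distinct representatives) matches the paper's, but there are two genuine gaps, both concentrated in the set $S$. First, your claimed sparsity of $S$ --- that for each fixed $i$ at most one of $S_i^1,\dots,S_i^k$ is nonempty, ``exactly as in part~(2) of the proof of Claim~\ref{CD1}'' --- does not transfer. That argument produced the $P_2\cup P_4$ $\{a_3,a_4,x,b_1,b_2,y\}$ for \emph{non-adjacent} $x\in S_i^1$, $y\in S_i^2$, and it worked only because in Claim~\ref{CD1} the set $\{a_2,\dots,a_\omega\}$ was anticomplete to $B$. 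In the present regime $[A,B]$ is a nonempty matching, and when $\omega=4$ the matched partners of $b_1$ and $b_2$ can exhaust $A\setminus\{a_1,a_i\}$, leaving no $P_2$ in $A$ anticomplete to the path $x\hbox{-}b_1\hbox{-}b_2\hbox{-}y$. The paper accordingly proves only the weaker statement (P4) that such blocks are pairwise \emph{anticomplete} (via Claim~\ref{C3}), not that all but one are empty, so $S$ need not reduce to a single transversal $\bigcup_i S_{\sigma(i)}^{\,i}$.

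Second, and more seriously, when $\omega=k=4$ the blocks $S_i^p$ and $S_j^q$ with all four indices distinct need \emph{not} be anticomplete --- the argument that kills such edges (the paper's (P7)) requires $\omega\ge5$ to find two vertices of $\{a_3,\dots,a_\omega\}$ avoiding the matched partner of $b_1$. Consequently a component of $G[S]$ may span up to four blocks, each forbidding two of the four colours, and your plan of giving each stable piece ``a single colour avoiding those two'' independently does not obviously succeed: adjacent pieces must also receive \emph{distinct} colours, and a greedy assignment from lists of size two can fail. The paper closes exactly this hole with the structural property (P5) (a nonempty adjacency between $S_i^p$ and $S_j^q$ forces anticompleteness to the blocks sharing one index), its consequence (P6) on the shape of components of $G[S]$, and then Hall's theorem applied to the bipartite list-assignment graph to extract a system of distinct representatives for the colours. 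Your proposal does not contain this idea, and without it the colouring of $S$ in the tight case $\omega=k=4$ is not justified; the case $\omega\ge5$ of your plan is fine once the sparsity claim is replaced by pairwise anticompleteness.
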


To prove Lemma~\ref{L41}, we first present the following claim.

\begin{claim}\label{C3}
Let $x,y$ be two vertices in $R\cup S\cup T\cup Z$.
If $N_{A\cup B}(x)\neq N_{A\cup B}(y)$, and $|N_{A\cup B}(x)\cup N_{A\cup B}(y)|\leq 3$, then $x\nsim y$.
\end{claim}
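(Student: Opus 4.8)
Suppose, for contradiction, that $x\sim y$. The plan is to produce an induced $P_2\cup P_4$ (or, in the tightest situations, an induced diamond), contradicting the hypotheses on $G$. Write $N_A(\cdot)$ and $N_B(\cdot)$ for neighbourhoods inside the cliques $A$ and $B$; recall each of $x,y$ has at most one neighbour in $A$ and at most one in $B$, so $|N_A(x)\cup N_A(y)|\le 2$ and $|N_B(x)\cup N_B(y)|\le 2$. By \eqref{E1} the set $[A,B]$ is a matching, which we denote by $m(\cdot)$. Since $N_{A\cup B}(x)\neq N_{A\cup B}(y)$, fix a \emph{distinguishing} vertex $u$; by symmetry (exchanging $x$ and $y$) we may assume $u\sim x$ and $u\nsim y$.

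\textbf{General template.} Let $Q\in\{A,B\}$ be the clique containing $u$ and let $Q'$ be the other clique. For any $w\in Q$ with $w\nsim x,y$ we have $w\sim u$ (as $Q$ is a clique) and hence $\{w,u,x,y\}$ induces the path $w-u-x-y$; since $x,y$ have at most two neighbours in $Q$ altogether, there are at least $|Q|-2\ge 2$ such choices of $w$. The key structural point is that \emph{both} extenders of this $P_4$ lie in the same clique $Q$: were they split between $A$ and $B$, the $P_4$ would block both cliques and leave no room for a disjoint anticomplete edge. With both extenders in $Q$, any vertex of $Q$ is adjacent to $u$ or $w$, so a $P_2$ that is disjoint from and anticomplete to the $P_4$ must be sought as an edge inside $Q'$. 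Such an edge exists as soon as $Q'$ contains two vertices anticomplete to $\{w,u,x,y\}$, that is, two vertices outside the forbidden set $N_{Q'}(x)\cup N_{Q'}(y)\cup\{m(u),m(w)\}$. Exploiting that $m$ is injective, I would choose $w$ so that $m(w)$ is either undefined or already lies in $N_{Q'}(x)\cup N_{Q'}(y)\cup\{m(u)\}$; then the forbidden set has size at most $|N_{Q'}(x)\cup N_{Q'}(y)|+1$, and when moreover $m(u)\in N_{Q'}(x)\cup N_{Q'}(y)$ it drops to $|N_{Q'}(x)\cup N_{Q'}(y)|$. Lemma~\ref{L0} is convenient here to certify the $P_4$ inside $Q'\cup\{x,y\}$ in the variants where $x$ or $y$ also attaches to $Q'$.

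\textbf{The easy range.} When $|N_{A\cup B}(x)\cup N_{A\cup B}(y)|\le 2$, at least one of the two parts $N_A(x)\cup N_A(y)$, $N_B(x)\cup N_B(y)$ is small, and after selecting $w$ as above a routine count using injectivity of $m$ together with $\omega\ge 4$ and $k\ge 4$ leaves at least two free vertices in $Q'$; choosing $Q'$ to be the clique in which $x,y$ have \emph{more} neighbours (so there are enough ``slots'' to absorb the partners $m(u),m(w)$) makes the bound go through. This disposes of all type-pairs $\{\mathrm{type}(x),\mathrm{type}(y)\}$ whose combined $(A\cup B)$-neighbourhood has size at most two.

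\textbf{The main obstacle.} The difficulty concentrates in the three cases with $|N_{A\cup B}(x)\cup N_{A\cup B}(y)|=3$: up to the symmetry $A\leftrightarrow B$ these are (i) $x\in S$, $y\in R$ with $N_B(x)\neq N_B(y)$ (and its mirror $x\in S$, $y\in T$ with $N_A(x)\neq N_A(y)$), and (ii) $x,y\in S$ sharing exactly one neighbour. For $\omega=k=4$ the naive count leaves only one free vertex in $Q'$, so the slack must be extracted from the injectivity of $m$, which is then forced to be a perfect matching between $A$ and $B$. In case (i) I would argue that, unless a ``good'' extender $w$ already exists, the partners of the remaining vertices are forced so that $m(u)$ (equivalently, the $A$-match of one distinguished vertex) coincides with a $B$-neighbour of $x$ or $y$; this is precisely the coincidence that shrinks the forbidden set to size two and yields the $P_2\cup P_4$. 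In case (ii), where $x,y$ share a neighbour $a\in A$ (so the distinguishing vertices lie in $B$ and one is forced to extend into $B$), the same forcing makes the partner of the shared neighbour equal to the $B$-neighbour of one of $x,y$; then $a$, that $B$-neighbour, $x$ and $y$ induce a diamond, contradicting diamond-freeness. Checking that these are the only escape routes in the tight cases, and that every branch terminates in either the prescribed $P_2\cup P_4$ or a diamond, is the technical heart of the argument.
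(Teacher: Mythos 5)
Your overall strategy---build the $P_4$ as $w\text{--}u\text{--}x\text{--}y$ with $u,w$ in one clique $Q$ and hunt for the $P_2$ among the vertices of the other clique $Q'$ not in $N_{Q'}(x)\cup N_{Q'}(y)\cup\{m(u),m(w)\}$---has a genuine gap: injectivity of the matching $[A,B]$ alone is not enough to leave two free vertices in $Q'$, and this failure is not confined to the cases you flag as hard. Already for $x\in S_1^1$, $y\in R_1$ (combined neighbourhood $\{a_1,b_1\}$, so in your ``easy range'') the distinguishing vertex must be $a_1$, forcing $Q=A$, and the forbidden set $\{b_1,m(a_1),m(w)\}$ can consist of three distinct vertices for every admissible choice of $w$ when $k=4$; your suggestion of ``choosing $Q'$ to be the clique with more neighbours'' is unavailable because $Q$ is dictated by where the distinguishing vertex lives. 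In the genuinely tight cases your proposed repair---that the partners are ``forced'' so that $m(u)$ coincides with a neighbour of $x$ or $y$, or that case (ii) terminates in a diamond---is asserted rather than proved, and as far as I can see it is not what actually happens: the bad configurations are eliminated for a different reason entirely.

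The missing idea, which is the heart of the paper's proof, is to use the edge $xy$ \emph{itself} as the $P_2$-component. Let $A'\subseteq A$ and $B'\subseteq B$ be the vertices anticomplete to $\{x,y\}$; under the hypothesis $|N_{A\cup B}(x)\cup N_{A\cup B}(y)|\le 3$ one has $|A'|\ge\omega-2\ge 2$ and $|B'|\ge k-1\ge 3$ (or symmetrically), so if $[A',B']$ were a nonempty matching, Lemma~\ref{L0} would produce a $P_4$ inside $A'\cup B'$ that together with the edge $xy$ induces a $P_2\cup P_4$. Hence $A'$ is \emph{anticomplete} to $B'$, which collapses your forbidden set: the matching partners $m(u),m(w)$ of vertices of $A'\cup\{u\}$ simply cannot land in $B'$ except via $u$'s single possible partner, and two free vertices of $B'$ nonadjacent to $u$ always remain. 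With that anticompleteness in hand the paper needs only two short cases ($|N_A(x)\cup N_A(y)|=2$ or not), each ending with an explicit $P_2\cup P_4$ of the form $\{b_i,b_j\}\cup\{a',u,x,y\}$. Without this step your counting argument does not close, so the proposal as written is not a proof.
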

\begin{proof}
Suppose, to the contrary, that there exist vertices $x,y\in R\cup S\cup T\cup Z$ satisfying the above conditions such that $x\sim y$. Since each of $|N_A(x)|$, $| N_A(y)|$, $|N_B(x)|$, and $|N_B(y)|$ is at most $1$, we may discuss the following two cases separately.

\medskip

\noindent\textbf{Case 1.} $|N_{A}(x)\cup N_{A}(y)|=2$ or $|N_{B}(x)\cup N_{B}(y)|=2$.

Suppose $|N_{A}(x)\cup N_{A}(y)|=2$ (the case where $|N_{B}(x)\cup N_{B}(y)|=2$ is analogous). Since $|N_{A\cup B}(x)\cup N_{A\cup B}(y)|\leq 3$, we have $|N_{B}(x)\cup N_{B}(y)|\leq 1$. By symmetry, we may assume that $x\in C_1\cap (C_0^\prime \cup C_1^\prime)$ and $y\in C_2\cap (C_0^\prime \cup C_1^\prime)$. Hence, $\{x,y\}$ is anticomplete to $\{a_3,\ldots,a_\omega,b_2,\ldots,b_k\}$. By \eqref{E1}, $[\{a_3,\ldots,a_\omega\},\{b_2,\ldots,b_k\}]$ is a matching. Since $\omega,k\geq 4$ and $G$ is $(P_2\cup P_4)$-free, Lemma~\ref{L0} implies that $\{a_3,\ldots,a_\omega\}$ is anticomplete to $\{b_2,\ldots,b_k\}$. Since $k\geq 4$, there exist vertices $b_i,b_j\in \{b_2,\ldots,b_k\}$ that are anticomplete to $a_1$. Consequently, $\{b_i,b_j,a_3,a_1,x,y\}$ induces a $P_2\cup P_4$, a contradiction.

\smallskip

\noindent\textbf{Case 2.} $|N_{A}(x)\cup N_{A}(y)|\leq 1$ and $|N_{B}(x)\cup N_{B}(y)|\leq 1$.

By symmetry, we may assume that $x,y\in (C_0\cup C_1)\cap (C_0^\prime \cup C_1^\prime)$. Thus, $\{x,y\}$ is anticomplete to $\{a_2,\ldots,a_\omega,b_2,\ldots,b_k\}$. By \eqref{E1}, $[\{a_2,\ldots,a_\omega\},\{b_2,\ldots,b_k\}]$ is a matching. Since $\omega,k\geq 4$ and $G$ is $(P_2\cup P_4)$-free, Lemma~\ref{L0} implies that $\{a_2,\ldots,a_\omega\}$ is anticomplete to $\{b_2,\ldots,b_k\}$. Since $N_{A\cup B}(x)\neq N_{A\cup B}(y)$, it follows that $N_{A}(x)\neq N_{A}(y)$ or $N_{B}(x)\neq N_{B}(y)$. Suppose $N_{A}(x)\neq N_{A}(y)$ (the case where $N_{B}(x)\neq N_{B}(y)$ is analogous). Then either $a_1\sim x$ and $a_1\nsim y$, or $a_1\nsim x$ and $a_1\sim y$. By symmetry, assume that $a_1\sim x$ and $a_1\nsim y$. Since $k\geq 4$, there exist vertices $b_i,b_j\in \{b_2,\ldots,b_k\}$ that are anticomplete to $a_1$. Consequently, $\{b_i,b_j,a_2,a_1,x,y\}$ induces a $P_2\cup P_4$, a contradiction.

\medskip

This completes the proof of Claim~\ref{C3}
\end{proof}

Then we present several useful properties of the subsets in the partition defined in Section~2. In particular, Properties (P1)–(P4) follow directly from Claim \ref{C3}.

\smallskip

\noindent(P1) $Z$ is anticomplete to $R\cup S\cup T$.

\medskip

\noindent(P2) $R$ is anticomplete to $S\cup T\cup Z$, and $R_i$ is anticomplete to $R_j$ for distinct $i,j \in [k]$.

\medskip

\noindent(P3) $T$ is anticomplete to $R\cup S\cup Z$, and $T_i$ is anticomplete to $T_j$ for distinct $i,j \in [\omega]$.

\medskip

\noindent(P4) $S$ is anticomplete to $R\cup T\cup Z$. Moreover, $S_i^p$ is anticomplete to $S_i^q$ for every $i\in[\omega]$ and distinct $p,q\in[k]$; and $S_i^p$ is anticomplete to $S_j^p$ for every $p\in[k]$ and distinct $i,j\in[\omega]$.

\medskip

\noindent(P5) Suppose $\omega = 4$. If $S_i^p$ is not anticomplete to $S_j^q$, then $S_i^p$ is anticomplete to $S_j^{q'}$ and $S_{j'}^q$ for pairwise distinct $p, q, q' \in [4]$ and pairwise distinct $i, j, j' \in [4]$.
\begin{proof}
By symmetry, it suffices to show that if $S_1^1$ is not anticomplete to $S_2^2$, then $S_1^1$ is anticomplete to $S_2^3$ and $S_3^2$. 
Let $x \in S_1^1$ and $y \in S_2^2$ with $x \sim y$. 

If $b_3$ is anticomplete to $\{a_3,a_4\}$, then $|[\{b_3,b_4\},\{a_1,a_2\}]|=2$ and $|[\{a_3,a_4\},\{b_1,b_2\}]|= 2$. Otherwise, if $|[\{b_3,b_4\},\{a_1,a_2\}]|\leq 1$, then there exist $a_i\in \{a_1,a_2\}$ and $a_j\in \{a_3,a_4\}$ such that both $a_i$ and $a_j$ are 
anticomplete to $\{b_3,b_4\}$, and hence $\{b_3,b_4,a_j,a_i,x,y\}$ induces a $P_2\cup P_4$, a contradiction. Similarly, if $|[\{a_3,a_4\},\{b_1,b_2\}]|\leq 1$, then there exists $b_p\in \{b_1,b_2\}$ anticomplete to $\{a_3,a_4\}$, and hence $\{a_3,a_4,b_3,b_p,x,y\}$ induces a $P_2\cup P_4$, a contradiction.

If $b_3$ is not anticomplete to $\{a_3,a_4\}$, then since $\{x,y,b_3,b_4,a_3,a_4\}$ cannot induce a $P_2\cup P_4$, it follows that $|[\{b_3,b_4\},\{a_3,a_4\}]|=2$. 

We now show that $S_1^1$ is anticomplete to $S_2^3$; the argument for $S_1^1$ being anticomplete to $S_3^2$ is analogous.  Suppose, to the contrary, that there exists a vertex $z\in S_2^3$ such that $x\sim z$. If $|[\{b_3,b_4\},\{a_1,a_2\}]|=2$ and $|[\{a_3,a_4\},\{b_1,b_2\}]|=2$, then $\{a_3,a_4\}$ is anticomplete to $b_4$ and $\{a_1,a_2\}$ is anticomplete to $b_2$. Moreover, there exist vertices $u\in \{a_3,a_4\}$ nonadjacent to $b_2$ and  $v\in \{a_1,a_2\}$ nonadjacent to $b_4$. Thus, $\{b_2,b_4,u,v,x,z\}$ induces a $P_2\cup P_4$, a contradiction. 
If $|[\{b_3,b_4\},\{a_3,a_4\}]|=2$, then $\{a_1,a_2\}$ is anticomplete to $b_4$ and $\{a_3,a_4\}$ is anticomplete to $b_2$. Moreover, there exist vertices $u\in \{a_3,a_4\}$ nonadjacent to $b_4$ and $v\in \{a_1,a_2\}$ nonadjacent to $b_2$. Therefore, $\{b_2,b_4,u,v,x,z\}$ induces a $P_2\cup P_4$, a contradiction.
\end{proof}

\noindent(P6) If $\omega = 4$ and $S_i^p$  is stable for all $i\in [\omega]$ and $p\in [k]$, then the vertex set of each component in $G[S]$ can be partitioned into at most four stable sets, each contained in a distinct $S_i^j$, with all $i$’s and all $j$’s being distinct within that component.
\begin{proof}
This follows directly from (P4) and (P5).
\end{proof}

\noindent(P7) If $\omega\geq5$, then $S_i^p$ is anticomplete to $S_j^q$ for distinct $p,q \in [k]$ and distinct $i,j \in [\omega]$.
\begin{proof}
By symmetry, it suffices to show that $S_1^1$ is anticomplete to $S_2^2$. Suppose to the contrary that there exist vertices $x\in S_1^1$ and $y\in S_2^2$ such that $x\sim y$. By \eqref{E1}, $[\{a_3,\ldots,a_\omega\},\{b_3,\ldots,b_k\}]$ is a matching. Since $\omega\ge 5$, $k\ge 4$ and $G$ is $(P_2\cup P_4)$-free, Lemma~\ref{L0} implies that $\{a_3,\ldots,a_\omega\}$ is anticomplete to $\{b_3,\ldots,b_k\}$. Since $\omega\geq 5$, there exist vertices $a_i,a_j\in \{a_3,\ldots,a_\omega\}$ that are anticomplete to $b_1$. Consequently, $\{a_i,a_j,b_3,b_1,x,y\}$ induces a $P_2\cup P_4$, a contradiction.
\end{proof}

\noindent(P8) If there exists an $S_i^p$ that is not stable, then $[A,B]=\{a_ib_p\}$, and every set $S_j^q$ with $(j,q)\neq (i,p)$ is stable.
\begin{proof}
By symmetry, it suffices to show that (P8) holds in the case that $S_1^1$ is not stable. Assume that there exist two adjacent vertices $x,y\in S_1^1$. Then $a_1\sim b_1$, since $G$ is diamond-free.  By ~\eqref{E1}, $[\{a_2,\ldots,a_\omega\},\{b_2,\ldots,b_k\}]$ is a matching. Since $\omega,k\geq 4$ and $G$ is $(P_2\cup P_4)$-free, Lemma~\ref{L0} implies that $\{a_2,\ldots,a_\omega\}$ is anticomplete to $\{b_2,\ldots,b_k\}$. Hence, $[A,B]=\{a_1b_1\}$. Since $G$ is diamond-free, every set $S_j^q$ with $(j,q)\neq (1,1)$ is stable.
\end{proof}

\noindent(P9) If there exists an $S_i^p$ that is not stable, then $S_j^q$ is anticomplete to $S_{j^\prime}^{q^\prime}$ for distinct $q,q^\prime \in [k]$ and distinct $j,j^\prime \in [\omega]$.
\begin{proof}
If $\omega\ge 5$, then (P9) follows directly from (P7). 
We therefore consider the case $\omega=4$ and $k=4$. 
By symmetry, assume that $i=p=1$. By (P8), we have $[A,B]=\{a_1b_1\}$. 
Suppose, to the contrary, that there exist vertices 
$x\in S_j^q$ and $y\in S_{j'}^{q'}$ such that $x\sim y$, 
where $j\neq j'$ and $q\neq q'$.
Since $[A,B]=\{a_1b_1\}$ and $\omega=k=4$, we have
\[
|[A\setminus\{a_j,a_{j'}\},\, B\setminus\{b_q,b_{q'}\}]|\le 1.
\]
Thus, there exists a vertex 
$w\in A\setminus\{a_j,a_{j'}\}$, with $w\neq a_1$, 
that is anticomplete to $B\setminus\{b_q,b_{q'}\}$. 
If $a_j\neq a_1$, then $\{w,a_j,x,y\}\cup(B\setminus\{b_q,b_{q'}\})$ induces a $P_2\cup P_4$. 
If $a_{j'}\neq a_1$, then $\{w,a_{j'},x,y\}\cup(B\setminus\{b_q,b_{q'}\})$ induces a $P_2\cup P_4$. 
In either case, we obtain a contradiction.
\end{proof}

With these properties in hand, we now prove Lemma~\ref{L41}.

\begin{proof}[\normalfont\textbf{Proof of Lemma~\ref{L41}}]

Let $G$ be a $(P_2\cup P_4,\text{ diamond})$-free graph with $\omega \ge 4$ and $k \ge 4$. By~\eqref{E2}, $A$ is not anticomplete to $B$.  
Then, by~\eqref{E1}, we may relabel $B$ so that, for some integer $m\in [k]$, we have $a_i\sim b_i$ for all $i\in [m]$, and $a_p\nsim b_q$ whenever $p\in [\omega]\setminus [m]$ and $q\in [k]\setminus [m]$. 
By Lemma~\ref{LD1}, each of the sets $R_j$, $T_i$, and $Z$ induces a perfect graph. Since $\omega(R_j), \omega(T_i) \leq \omega - 1$ and $\omega(Z) \le k\leq \omega$, we may colour them with at most $\omega - 1$, $\omega - 1$, and $\omega$ colours, respectively. We now construct an $\omega$-colouring of $G$ by distinguishing two cases according to whether every $S_i^p$  is stable. 

\medskip
\noindent\textbf{Case 1.} $S_i^p$  is stable for all $i\in [\omega]$ and $p\in [k]$.
\medskip

We first colour $A$ and $B$.
\begin{itemize}
\item Colour $a_i$ with colour $i$ for $i\in [\omega]$.
\item if $m=1$, colour $b_i$ with colour $i+1$ for $i\in [k-1]$, and colour $b_k$ with colour $1$;
\item if $m\ge2$, colour $b_i$ with colour $i+1$ for $i\in [m-1]$, colour $b_m$ with colour $1$, and colour $b_j$ with colour $j$ for $j\in [k]\setminus [m]$.
\end{itemize}

Next, we colour $S$, considering separately the cases $\omega = 4$ and $\omega \ge 5$.
\begin{itemize}
\item If $\omega =4$, by (P6), we may colour each component of $G[S]$ with at most four suitable colours from $[4]$. To justify this, we construct a bipartite graph demonstrating the existence of such a colouring. 
Let $X$ denote the set of stable sets in a component of $G[S]$ as described in (P6). 
For each $X_t \in X$, let $Y_t \subseteq [4]$ be the set of colours that can be assigned to $X_t$; 
for example, if $X_t \subseteq S_i^j$, then $Y_t = [4] \setminus (c(a_i) \cup c(b_j))$. 
Let 
\[
Y = \bigcup_{X_t \in X} Y_t.
\] 
We now regard $X$ and $Y$ as the two vertex sets of a bipartite graph $Q$ with bipartition $(X, Y)$. 
Each vertex $X_t\in X$ represents a stable set in the component, and each vertex $y\in Y$ represents a colour in $[4]$. 
Vertices $X_t$ and $y$ are adjacent in $Q$ if and only if $y\in Y_t$; equivalently, $N_Q(X_t)=Y_t$ for all $X_t\in X$. 
By Hall’s Theorem~\cite{HALL} applied to $Q$, there exists a matching saturating $X$.
\item If $\omega \geq 5$, by (P4) and (P7), we may colour each $S_i^j$ with at most one colour from $[\omega]\setminus (c(a_i)\cup c(b_j))$ for $i\in [\omega]$ and $j\in [k]$.
\end{itemize}
Finally, we colour $Z$, $R$, and $T$ as follows:
\begin{itemize}
\item By (P1), colour $Z$ with at most $\omega$ colours from $[\omega]$.
\item By (P2), colour $R_i$ with at most $\omega -1$ colours from $[\omega]\setminus c(b_i)$ for $i\in [k]$.
\item By (P3), colour $T_i$ with at most $\omega -1$ colours from $[\omega]\setminus c(a_i)$ for $i\in [\omega]$.
\end{itemize}

\noindent\textbf{Case 2.} There exists an $S_i^p$ that is not stable.
\medskip

By (P8) together with the argument given in the first paragraph of the proof,
it follows that $i=p=m=1$, $[A,B]=\{a_1b_1\}$, and every set $S_j^q$ with $(j,q)\neq(1,1)$ is stable. Note that $\omega(S_1^1)\leq \omega-2$ and $S_1^1$ is perfect, we may colour it with at most $\omega -2$ colours.

\begin{itemize}
    \item Colour $a_i$ with colour $i$ for $i\in [\omega]$.
    \item Colour $b_i$ with colour $i+1$ for $i\in [k-1]$, and colour $b_k$ with colour $1$.
    \item By (P4) and (P9), colour $S_1^1$ with at most $\omega-2$ colours from $[\omega] \setminus \{1,2\}$, and colour $S_j^q$ with at most one colour from $[\omega]\setminus (c(a_i)\cup c(b_j))$ for $(j,q)\neq (1,1)$.
    \item By (P1), colour $Z$ with at most $\omega$ colours from $[\omega]$.
    \item By (P2), colour $R_i$ with at most $\omega -1$ colours from $[\omega]\setminus c(b_i)$ for $i\in [k]$.
    \item By (P3), colour $T_i$ with at most $\omega -1$ colours from $[\omega]\setminus c(a_i)$ for $i\in [\omega]$.
\end{itemize}
This completes the proof of Lemma~\ref{L41}.
\end{proof}

\section{Proof of Theorem 1.3}

\begin{figure}[htbp]
    \centering
    \begin{subfigure}{0.39\linewidth}
        \centering
        \includegraphics[width=\linewidth]{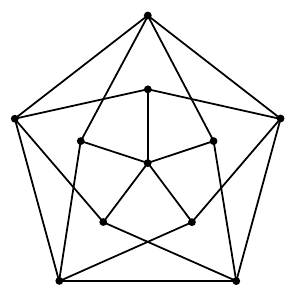}
        \caption{The Mycielski--Gr\"otzsch graph.}
        \label{}
    \end{subfigure}
\hspace{4em}
    \begin{subfigure}{0.39\linewidth}
        \centering
        \includegraphics[width=\linewidth]{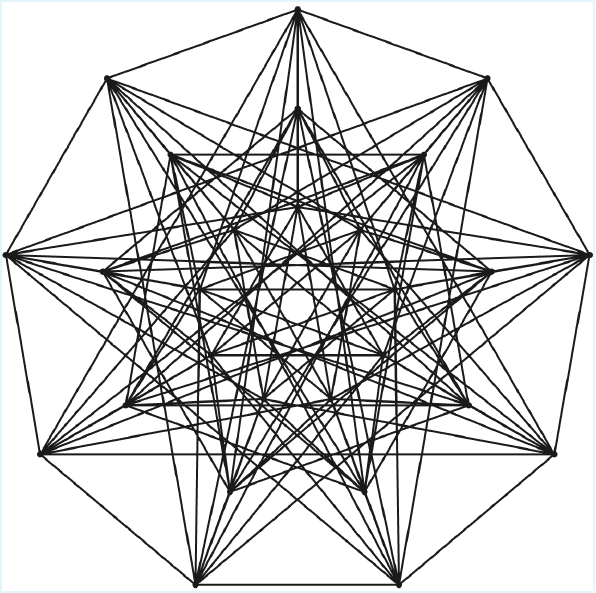}
        \caption{The complement of the Schl\"afli graph.}
        \label{fig:right}
    \end{subfigure}
    \caption{The graphs $H_1$ and $H_2$ used to show the optimality of the bounds in Theorem~\ref{THM1}.}
    \label{}
\end{figure}

\begin{proof}[\normalfont\textbf{Proof of Theorem~\ref{THM1}}]

Let $G$ be a ($P_2\cup P_4$, diamond)-free graph.  
If $\omega \le 2$, then $\chi(G)\le 4$ by Lemma~\ref{LP2P4}; 
thus we may assume $\omega \ge 3$.  
If $k \le 2$, then the theorem holds by Lemmas~\ref{LD2} and~\ref{LD21}; 
hence we may further assume $k \ge 3$.

If $\omega=3$ and $k=3$, then Lemma~\ref{L31} implies $\chi(G)\le 6$.  

If $\omega\ge 4$ and $k=3$, then Lemmas~\ref{LD2}, \ref{L32}, and~\ref{L33} imply $\chi(G)=\omega$.  

If $\omega\ge 4$ and $k\ge 4$, then Lemma~\ref{L41} implies $\chi(G)=\omega$.

Next, we show that the bounds are optimal for all $\omega \ge 2$ by presenting two examples for $\omega=2$ and $\omega=3$ that attain the corresponding upper bounds.

Let $H_1$ be the Mycielski--Gr\"otzsch graph (see Figure 7(a)) and let $H_2$ be the complement of the Schl\"afli graph (see Figure 7(b); also available at https://houseofgraphs.org/graphs/19273). 
Both $H_1$ and $H_2$ are $(P_2 \cup P_4,\text{ diamond})$-free. 
Moreover, $\omega(H_1)=2$ and $\chi(H_1)=4$, while $\omega(H_2)=3$ and $\chi(H_2)=6$.

This completes the proof of Theorem~\ref{THM1}. \end{proof}

\section*{Declaration of competing interest}

The authors declare that they have no known competing financial interests or personal relationships that could have appeared to influence the work reported in this paper.

\end{document}